\crefname{hypothesis}{Hypothesis}{Hypotheses}
\title{Towards   Robust Calculation of index-$k$ Saddle Point:
 Iterative Proximal-Minimization and Differential Game  Model\thanks{Submitted to the editors DATE.
\funding{This work was funded by the support of NSFC 11901211 and the Natural Science Foundation of Top Talent of SZTU GDRC202137. 
  Xiang Zhou acknowledges the support of Hong Kong RGC GRF grants 11307319, 11308121, 11318522 and  NSFC/RGC Joint Research Scheme  (CityU 9054033).}}}
\author{Shuting Gu\thanks{College of Big Data and Internet, Shenzhen Technology University, Shenzhen 518118, P.R. China 
  (\email{gushuting@sztu.edu.cn}).}
  \and Hao Zhang\thanks{School of Data Science, City University of Hong Kong,
Hong Kong SAR.}
  \and Xiang Zhou\footnotemark[3]\thanks{School of Data Science and Department of Mathematics,
City University of Hong Kong, Hong Kong SAR (\email{xizhou@cityu.edu.hk}). } 
}
\newcommand*{\addFileDependency}[1]{
  \typeout{(#1)}
  \@addtofilelist{#1}
  \IfFileExists{#1}{}{\typeout{No file #1.}}
}
\newtheorem{thm}{Theorem}
\newtheorem{pro}[thm]{Proposition}
\newtheorem{dfn}{Definition}
\newtheorem{rem}{Remark}
\newtheorem{asm}{Assumption}
\newcommand{\wt}[1]{\widetilde{#1}}
\newcommand{\inpd}[2]{\left\langle #1, #2 \right\rangle}
 \def\Lip{\operatorname{Lip}}
\def\argmin{\operatorname{argmin}}
\newcommand{\bv}[1]{\mathbf{#1}}
\begin{document}
	\nolinenumbers 

\maketitle

\begin{abstract}
 Saddle point with a given Morse index on a potential energy surface
is an important object related to energy landscape in physics and chemistry. Efficient numerical methods based on iterative minimization formulation have been proposed in the forms of  the sequence of minimization subproblems or  the continuous dynamics. We here  
present a differential game  interpretation of this formulation and theoretically investigate  the Nash equilibrium of the proposed  game and the original saddle point on potential energy surface. To  define this differential game, a new  proximal function growing faster than quadratic  is introduced to the cost function in the game and   a   robust Iterative Proximal-Minimization algorithm  (IPM) is then derived to compute the saddle points. 
We prove that the Nash equilibrium of the game is exactly the saddle point in concern and show that the new  algorithm is more robust 
than the previous iterative minimization algorithm without proximity, while    the same convergence rate and the computational cost still hold. A two dimensional  problem and the Cahn-Hillard  problem are tested to demonstrate this numerical advantage.
\end{abstract}

\begin{keywords}
  Iterative Proximal-Minimization, saddle point, transition state, differential game model
\end{keywords}

\begin{AMS}
  68Q25, 68R10, 68U05
\end{AMS}

	\section{Introduction}
	 Saddle points with important physical meaning   have been of broad interest in physics, chemistry, biology and material sciences\cite{Energylanscapes}.
 In computational chemistry, besides multiple local minimum points,  one of the most important objects on the potential energy surface
 is the transition state.  Such transition states are the bottlenecks on the most probable transition paths  between different local wells\cite{cerjan1981,TPSChandler2002,String2002}. In general, the transition state can be 
 described as a special type of the saddle point with Morse index 1, which is defined as the critical point with only one unstable direction.

 In recent years, a large number of numerical methods 
 have been proposed \cite{ String2002, NEB1998,   Dimer1999, GAD2011, IMF2015,DuSIAM2012,npjLeiZhang2016} and developed
 \cite{String2007, Ren2013,SimGAD2018, IMA2015,zhang2016optimization} to efficiently compute these index-1 saddle points.
 Most of them \cite{GAD2011,IMF2015,yin2019high} can be   generalized to the case with the Morse index $k\ge 1$.  In addition, for applications to   multiple unstable solutions of certain nonlinear partial differential equations, there are   computational methods   based on mountain pass and local min-max method  \cite{MPA1993,ZHOUJXSISC2001,ZHOUJXSISC2012} which however do not aim for specific index.

 One important class of algorithms for  saddle points with   a given index 
 is  based on the idea of using the min-mode direction\cite{Crippen1971,Dimer1999, GAD2011,DuSIAM2012, IMF2015, zhang2016optimization,SamantaGAD}.
 The work of  the iterative minimization formulation (IMF) \cite{IMF2015} builds a rigorous mathematical model
 for this min-mode idea and then    a series of IMF-based algorithms  have been developed\cite{LOR2013,IMA2015,ProjIMF}, generalized\cite{MsGAD2017,SimGAD2018} and analyzed \cite{Ortner2016dimercycling}. Although the IMF enjoys the quadratic convergence rate\cite{IMF2015}, 
 the convergence  is only   local and   is subject to the quality of the initial conditions. 
However, in practice, this problem can be much alleviated by using  adaptive inexact solver\cite{GAD2011,IMA2015} for the sub-problems in the IMF.  In most cases, such a trade-off between   efficiency and   robustness works well for applications of these algorithms, but this demands a careful tuning of parameters and a brute-force randomized strategy. On the other side, the continuous model of the IMF as the one-step approximation to the subproblems, which is called gentlest ascent dynamics (GAD) \cite{GAD2011}, is empirically found to be more robust \cite{IMA2015,GAD-DFT2015} than the vanilla IMF,  but GAD only has a linear convergence speed. These current research results thus require a further exploration  of  the underlying  reasons of the numerical divergence  issue. We believe this issue   at least partially comes from the lack of convexity in the sub-problem of optimizing an auxiliary function in the IMF. 
 To investigate this practical challenge, we take a new viewpoint of game theory and its connection to the saddle point.

The IMF defines  an iterative scheme for both a position variable and an orientation vector, whose fixed point, if converged,  is an  index-1 saddle point.  At this fixed point,   the position variable
and the orientation variable minimize  their own objective functions, respectively. This form is very much like a differential game model \cite{2018mechanics}, where the central notion is the Nash equilibrium in game theory \cite{nash1950}. Our main work here is to explicitly investigate such connections between the existing IMF algorithms and differential game models. We shall see that  for a differential game well defined compatible with the IMF, one needs to improve the existing auxiliary function used in the original IMF. 
Our main technique  is to introduce a proximal function as a penalty to the existing auxiliary function to ensure its strict convexity. We show that one has to 
choose the  proximal function growing faster than quadratic function, in contrast to the squared $L_2$ norm  in classic proximal point algorithms. 
With this new proximal technique, we construct a new method, called Iterative Proximal Minimization (IPM) method, as an important improvement of the existing IMF-based algorithms. In theory, we contribute to the proof of  the   equivalence between the following three objects:  saddle point of the potential function, the fixed point of the new iterative scheme, and the Nash equilibrium of the differential game.

The new strategy of using a proper proximal function as a penalty in  this proposed  iterative proximal minimization method is very easy to implement without any extra computational burden than the existing methods. The quadratic convergence rate still preserves.  Most importantly, 
the new iterative proximal method 
 can significantly enhance the robustness  since 
  each subproblem has a well-defined minimizer.
  Extensive numerical  experiences  show that 
when the initial guess is far from the true saddle point,
the subproblem of minimizing the auxiliary function in the original IMF is better to be solved with  a small inner iteration number to maintain the scheme convergent; but for the new method proposed here, the convergence to the saddle point is easier to achieve  regardless how  accurately the subproblem is solved. 
The reason of the improvement in the algorithmic robustness is the convexification of the auxiliary function in the new method when we aim to build the differential game model in a  rational way. We emphasize that due to the special feature of our saddle point iteration method, the penalty function in the auxiliary function can not be quadratic.

To bridge  between the  differential game and the
IMF is more than an academic exposition.  It has practical consequence. 
Generally speaking,  to establish the link from game theory and Nash equilibrium of various optimization problems is quite beneficial both in theory and in algorithmic development\cite{nash1950}. 
The insights from the point of view  of the game theory and Nash equilibrium also have motivated the development of numerical algorithms\cite{gemp2020,DGM2019, omidshafiei2020navigating}.
This work here also contributes to the literature by establishing the connection between saddle point problem in computational physics and the game theory for the first time. As we mentioned before, the new iterative proximal minimization method does not only  allow each sub-optimization problem in the iterative minimization formulation to be well-defined globally, but also leads to the improvement of the robustness of the previous algorithms.

The paper is organized as follows. Section \ref{background} is about the concept of the saddle point and the Morse index,  and the review of the IMF. In Section \ref{Game_theory}, the interpretation of the IMF is given by the game theory.
In Section \ref{IPM}, we first propose the Iterative Proximal Minimization scheme, and prove the equivalence of the Nash equilibrium and the saddle point by introducing the game model, then we present the Iterative Proximal Minimization Algorithm for this new method.
In   Section \ref{Num_ex}, we test two numerical examples: the saddle points of the two dimensional toy model and the   Ginzburg-Landau free energy. Section \ref{con} is the conclusion.

\section{Background}\label{background}
	\subsection{Saddle point and Morse index}
	The saddle point $x^*$ of a potential function $V(x)$ is a critical point at which the partial derivatives of a function $V(x)$ are zero but is not an extremum. The Morse index of a critical point of a smooth function $V(x)$ on a manifold is the negative inertia index  of the Hessian matrix of the function $V(x)$.  
 
{\bf Notations:}
	 \begin{enumerate}
	     \item $\lambda_i(x)\in\mathbb{R}$ for any $x\in\mathbb{R}^d$ is the  $i$-th (in the ascending order) eigenvalue of the Hessian matrix $H(x) = \nabla^2 V(x)$;
	     \item $\bv v_i(x)$ is the eigenvector corresponding to $i$-th eigenvalue $\lambda_i(x)$ of the Hessian matrix $H(x)$;
	     \item $\mathcal{S}_1$ is the collections of all  index-1 saddle points $x^*$ of V, defined by
	     $$
	     \nabla V(x^*) = \mathbf{0}, \quad \text{ and } \lambda_1(x^*)< 0 <\lambda_2(x^*)\cdots\leq \lambda_d(x^*);
	     $$
	     \item $\Omega_{1}$ is a subset of $\mathbb{R}^d$ called index-1 region, defined as
	     $$
	     \Omega_{1}:=\{x\in \mathbb{R}^d:  \lambda_1(x)<0<\lambda_2(x)\};
	     $$
	     \item $\lambda_{\min},\lambda_{\max}:\mathbb{R}^{d\times d}\rightarrow\mathbb{R}$ is the smallest and the largest eigenvalue of a (symmetric) matrix respectively, so by the Courant-Fisher theorem, 
        \begin{equation}\label{eig_mm}
            \lambda_{\min}(A) = \min_{\|z\|=1}z^\top A z,\qquad \lambda_{\max}(A) = \max_{\|z\|=1}z^\top A z;
        \end{equation}
        \item $\mathcal{B}_\epsilon (x) := \{y\in \mathbb{R}^d: \|x-y\| \le \epsilon\}.$
	 \end{enumerate}

	\subsection{Review of  Iterative Minimization Formulation}

     The iterative minimization algorithm to search saddle point of index-1     \cite{IMA2015} is given by
    \begin{equation}\label{eq:imf}
    \left\{\begin{array}{l}
    \bv u^{k}=\argmin_{\|\tilde{\bv u}\|=1} \tilde{\bv u}^{\top} H\left(x^{(k)}\right) \tilde{\bv u}, \\
    x^{(k+1)}=\argmin_{y\in\mathcal{U}(x^k)}W(y;x^{(k)},\bv u^{(k)}),
    \end{array}\right.
    \end{equation}
    where $\mathcal{U}(x)\subset\mathbb{R}^d$ is a local neighborhood of $x$, $H(x) = \nabla^2 V(x)$ and the auxiliary function 
    \begin{equation} \label{W}
            W(y;x, \bv u) = (1-\alpha)V(x) + \alpha V\left(y- \bv u  \bv u^\top(y-x)\right)-\beta V(x +  \bv u  \bv u^\top(y-x)).
    \end{equation}
The constant  parameters $\alpha$ and $\beta$ satisfies $\alpha+\beta>1$.    
    The fixed point $(x^*,\bv u^*)$ of the above iterative scheme should satisfy
    \begin{equation} \label{IMF}
    \left\{\begin{array}{l}
    \bv u^*=\argmin_{\|\tilde{\bv u}\|=1} \tilde{\bv u}^{\top} H\left(x^{*}\right) \tilde{\bv u} =\bv v_1(x^*),\\
    x^{*}=\argmin_{y\in\mathcal{U}(x^*)}W(y;x^*,\bv u^*).
    \end{array}\right.
    \end{equation}
     In addition, suppose that 
    $
    H(x^*) =\bv v\Lambda \bv v^\top,
    $
    where $\Lambda = \operatorname{diag}(\lambda_i)$. Then, a necessary and sufficient condition for
    \[ 
    x^{*}=\underset{y\in\mathcal{U}(x*)}{\argmin}~ W(y;x^*,\bv v_1(x^*))
    \]
    is that 
    \[
    \lambda_{1}(x^*)<0<\lambda_{2}(x^*)<\cdots<\lambda_{d}(x^*).
    \]
    The  formal statement  is quoted below from the paper of the IMF \cite{IMF2015}.
    
    \begin{thm}\label{th1}
    Assume that $V(x)\in\mathcal{C}^3(\mathbb{R}^d;\mathbb{R})$. For each $x$, let $\bv v_1(x)$ be the normalized eigenvector corresponding to the smallest eigenvalue of the Hessian matrix $H(x)=\nabla^2 V(x)$, i.e.
    $$
    \bv v_1(x) = \argmin_{u\in\mathbb{R}^d,\| \bv u\|=1}\bv u^\top H(x) \bv u.
    $$
    Given $\alpha,\beta\in\mathbb{R}$ satisfying $\alpha+\beta>1$, we define the following function of variable $y$,
    \begin{equation}
        W(y;x ) = (1-\alpha)V(x) + \alpha V\left(y-\bv v_1(x)\bv v_1(x)^\top(y-x)\right)-\beta V(x+\bv v_1(x)\bv v_1(x)^\top(y-x))
    \end{equation}
    Suppose that $x^*$ is an index-1 saddle point of the function $V(x)$, i.e,
    $$
    \lambda_{1}<0<\lambda_{2}<\cdots<\lambda_{d}.
    $$
    Then the following statements are true
    \begin{enumerate}
        \item $x^*$ is a local minimizer of $W(y;x^*,\alpha,\beta)$;
        \item There exists a neighborhood $\mathcal{U}(x^*)$ of $x^*$ such that for any $x\in\mathcal{U}(x^*), W(y;x,\alpha,\beta)$ is strictly convex in $y\in\mathcal{U}$ and thus has an unique minimum in $\mathcal{U}(x^*)$;
        \item Define the mapping $\Psi:x\in\mathcal{U}\rightarrow\Psi(x)\in\mathcal{U}$ where $\Psi(x)$ is the unique local minimizer of $W$ in $\mathcal{U}$ for any $x\in\mathcal{U}$. Further assume that $\mathcal{U}$ contains no other stationary point of $V$ expect $x^*$. Then the mapping has only one fixed point $x^*$.
    \end{enumerate}
    \end{thm}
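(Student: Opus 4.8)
The plan is to reduce all three claims to one Hessian computation that becomes transparent once space is split along the min-mode direction. Writing $P(x):=\bv v_1(x)\bv v_1(x)^\top$ for the rank-one projection onto $\bv v_1(x)$ and $Q(x):=I-P(x)$ for its orthogonal complement, one checks the identities $y-\bv v_1(x)\bv v_1(x)^\top(y-x)=x+Q(x)(y-x)$ and $x+\bv v_1(x)\bv v_1(x)^\top(y-x)=x+P(x)(y-x)$, so that
\[
W(y;x)=(1-\alpha)V(x)+\alpha V\bigl(x+Q(x)(y-x)\bigr)-\beta V\bigl(x+P(x)(y-x)\bigr).
\]
Since $P$ and $Q$ depend on $x$ only, differentiation in $y$ is clean: with $P=P(x)$, $Q=Q(x)$ one gets $\nabla_y W(y;x)=\alpha Q\,\nabla V(x+Q(y-x))-\beta P\,\nabla V(x+P(y-x))$ and $\nabla_y^2 W(y;x)=\alpha Q\,H(x+Q(y-x))\,Q-\beta P\,H(x+P(y-x))\,P$.

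For statement (1) I would evaluate these at $y=x=x^*$. The gradient vanishes at once because $\nabla V(x^*)=0$, so $x^*$ is a critical point of $W(\cdot;x^*)$. For the Hessian, insert the spectral decomposition $H(x^*)=\sum_i\lambda_i\bv v_i\bv v_i^\top$; since $P^*$ and $Q^*$ select the complementary eigenspaces of $H(x^*)$, this gives $\nabla_y^2 W(x^*;x^*)=\alpha\sum_{i\ge 2}\lambda_i\bv v_i\bv v_i^\top-\beta\lambda_1\bv v_1\bv v_1^\top$. The index-1 hypothesis $\lambda_1<0<\lambda_2\le\cdots\le\lambda_d$ together with the sign conditions $\alpha,\beta>0$ makes every eigenvalue of this matrix positive ($\alpha\lambda_i>0$ on the complement and $-\beta\lambda_1>0$ along $\bv v_1$), so the Hessian is positive definite and $x^*$ is a strict local minimizer.

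Statement (2) is a perturbation of this computation, and here lies the main subtlety. The map $(x,y)\mapsto\nabla_y^2 W(y;x)$ is continuous provided $\bv v_1(x)$ (equivalently $P(x)$) varies continuously near $x^*$; this is exactly where the strict spectral gap $\lambda_1(x^*)<\lambda_2(x^*)$ enters, since a simple isolated eigenvalue has a smooth eigenprojection, and $H\in\mathcal{C}^1$ follows from $V\in\mathcal{C}^3$. Because positive definiteness is an open condition and the Hessian is positive definite at $(x^*,x^*)$, I can choose a convex ball $\mathcal{U}=\mathcal{B}_\epsilon(x^*)$ on which $\nabla_y^2 W(y;x)\succ 0$ for all $x,y\in\mathcal{U}$. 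Positive definiteness of the $y$-Hessian over the convex set $\mathcal{U}$ yields strict convexity of $W(\cdot;x)$, hence a unique minimizer (existence from continuity on the compact ball, uniqueness from strict convexity); shrinking $\epsilon$ keeps this minimizer interior, so it is characterized by $\nabla_y W=0$, which is what the last claim needs.

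For statement (3), a fixed point $\bar x=\Psi(\bar x)$ is the interior minimizer of $W(\cdot;\bar x)$ and therefore satisfies $\nabla_y W(\bar x;\bar x)=0$. Evaluating at $y=x=\bar x$ collapses both projected arguments to $\bar x$ and gives $\alpha Q(\bar x)\nabla V(\bar x)=\beta P(\bar x)\nabla V(\bar x)$. The left side lies in the range of $Q(\bar x)$ and the right in the range of $P(\bar x)$, which are orthogonal, so with $\alpha,\beta\neq 0$ both sides vanish, forcing $\nabla V(\bar x)=(P+Q)\nabla V(\bar x)=0$. Thus every fixed point is a stationary point of $V$ in $\mathcal{U}$, and the standing assumption that $x^*$ is the only such point gives $\bar x=x^*$; conversely $x^*$ is a fixed point because (1)--(2) identify it as the unique minimizer of $W(\cdot;x^*)$. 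I expect the eigenvector-continuity step in (2) --- and arranging a single neighborhood that simultaneously guarantees strict convexity, interiority of the minimizer, and exclusion of extra critical points --- to be the only genuinely delicate part; the remainder is bookkeeping around the projection identities.
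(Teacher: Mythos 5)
Your overall route is the same one the paper relies on: split $\mathbb{R}^d$ along the min-mode via $P=\bv v_1\bv v_1^\top$ and $Q=\mathbf{I}-P$, diagonalize the $y$-Hessian of $W$ at $y=x=x^*$ in the eigenbasis of $H(x^*)$, get statement (2) by continuity of the eigenprojection (simple lowest eigenvalue) plus openness of positive definiteness, and get statement (3) from the first-order condition at a fixed point. The paper does not reprove Theorem~\ref{th1} (it quotes it from the IMF reference), but its proofs of Theorem~\ref{pro:convex} and Theorem~\ref{lem:fs} for the penalized function $\widetilde{W}_\rho$ are exactly this computation, so in spirit you and the paper agree.

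However, there is a genuine gap, and it is concentrated in one term. You read the first term of $W$ literally as $(1-\alpha)V(x)$, constant in $y$, so your Hessian is $\alpha QH Q-\beta PHP$ and its eigenvalues at $(x^*,x^*)$ are $\{\alpha\lambda_2,\dots,\alpha\lambda_d,-\beta\lambda_1\}$. To make these positive you inserted ``the sign conditions $\alpha,\beta>0$'' --- but those are \emph{not} hypotheses of the theorem, which assumes only $\alpha+\beta>1$. Under the literal reading your argument (and indeed the statement itself) fails for, say, $\alpha=2$, $\beta=-1/2$: then $-\beta\lambda_1=\lambda_1/2<0$ and $x^*$ is not a local minimizer of your $W(\cdot;x^*)$. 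The resolution is that the displayed formula contains a typo: the first term is meant to be $(1-\alpha)V(y)$, as the paper's own derivative computations make explicit --- see \eqref{eq:HWT}, where that term contributes $(1-\alpha)H(y)$ to the Hessian, and \eqref{1st_eq}, where it contributes $(1-\alpha)\nabla V(y)$ to the gradient. With the $(1-\alpha)H$ block restored, the Hessian at $y=x$ becomes $(1-\alpha)H+\alpha QHQ-\beta PHP$, whose eigenvalues are $\{(1-\alpha-\beta)\lambda_1,\lambda_2,\dots,\lambda_d\}$; at an index-1 point these are all strictly positive precisely under the stated hypothesis $\alpha+\beta>1$, with no sign conditions on $\alpha,\beta$ individually. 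The same correction is needed in your statement (3): the first-order condition at a fixed point is then $\bigl[\mathbf{I}-(\alpha+\beta)P\bigr]\nabla V(\bar x)=\bv 0$ (the paper's \eqref{f_order}), and since $\mathbf{I}-(\alpha+\beta)P$ has eigenvalues $1$ and $1-(\alpha+\beta)\neq 0$ it is invertible, forcing $\nabla V(\bar x)=\bv 0$ using only $\alpha+\beta\neq 1$ --- your version instead needs $\alpha\neq 0$ and $\beta\neq 0$ separately, which again is not assumed. The rest of your argument (the projection identities, the continuity/compactness step in (2), and the uniqueness-of-stationary-point conclusion) is sound once this term is repaired.
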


\section{Game Theory interpretation  of IMF}\label{Game_theory}

The Iterative Minimization Formulation is an iterative scheme defined in \eqref{eq:imf}. Various algorithms have been developed based on this formulation\cite{IMA2015,convex_IMF,ProjIMF}.  
Game theory studies the mathematical models of strategic interactions between players, where each player takes an action and receives a utility (or pays a cost) as a function of the actions taken by all agents in the game. 
 Here we are interested in explicitly building a game theory model so that the fixed-point in the iterative minimization scheme \eqref{IMF} in fact is the optimal action taken by the participants in the game.

We start with the definition of a game. A {\it game} of $n$-players is denoted  as $G_n = (P,A,C)$, where $P = \{1,\cdots,n\}$ is the set of $n$ players, $A = A_1\times\cdots\times A_n$ is the action space of these players,  and $C = \{C_i(a_1,\ldots,a_n):A\rightarrow\mathbb{R} \, ,  i\in P\}$ is the set of utility(or cost) functions. Each player $i$ can choose an action from its  own action space with the target to maximize its utility  (or minimize it cost) $C_i$. 

A key  concept in game theory is the {\it Nash equilibrium}\cite{nash1950}, which specifies an action profile under which no player can improve its own utility (or reduce its cost) by changing its own action, while all other players fix their actions. A Nash equilibrium can be a pure action profile or a mixed one, corresponding to specific action of each player or a probability distribution over the action space, respectively.
Here we are  interested in the pure Nash equilibrium and the setting that all players attempt to minimize their costs.
\begin{dfn}[Pure Nash equilibrium]
For an $n$-player game $G_n = \left(P,A,C\right)$, $P = \{1,2,...,n\}$, $A = A_1\times\cdots\times A_n$, $C = \{C_i:A\rightarrow\mathbb{R}, i\in P\}$ denoting 
the set of players, action space and cost functions, respectively, an action profile $a^* = (a^*_1,...,a^*_n)$ is said to be a pure Nash equilibrium if and only if
\[ C_k(a^*_1,...,a^*_k,...,a^*_n)\leq C_k(a^*_1,...,a^*_{k-1},a_k,a^*_{k+1},...,a^*_n), \quad  \forall a_k\in A_k,
\forall k\in P.
\]
\end{dfn}

The auxiliary function $W$ in the IMF \eqref{eq:imf} depends on $y$ and $x$,  where $x$ is the parameter.
We introduce a new player whose  action is $y$ in addition to the two players with action $(x,\bv u)$ and propose a simple penalty cost $\|x-y\|^2$ to enforce the synchronization between the players $x$ and $y$. Then the condition in equation \eqref{IMF} is equivalent to
\begin{equation} \label{eq:imf_fp}
    \left\{\begin{array}{l}
    \bv u^*=\argmin_{\|\tilde{\bv u}\|=1} \tilde{\bv u}^{\top} H\left(x^{*}\right) \tilde{\bv u}, \\
    y^{*}=\argmin_{y\in\mathcal{U}(x^*)}W(y;x^*,\bv u^*),\\
    x^* = \argmin_{y\in\mathbb{R}^d}\frac{1}{2}\|x-y^*\|^2.\\
    \end{array}\right.
\end{equation}

We  now can see that each of $x^*, y^*$ and $\bv u^*$ minimizes its own function of $(x,y,\bv u)$ and
this condition \eqref{eq:imf_fp}  naturally motivates us to understand the fixed points of the iterative scheme as the Nash equilibrium of a game. However, the local neighbourhood $\mathcal{U}(x)$, which is virtually the feasible set for $y$ to make sure the minimization of $W$ is well defined, depends on the action of the player $x$. 
 In practice, the restriction of this local neighbour $\mathcal{U}(x)$ is resolved\cite{GAD2011,IMA2015} by using the parameter $x$ as  the initial guess for minimizing over $y$.
 But in the classic game theory setup,  the action space of each player $A_i$ should be independent of actions taken by other players\cite{osborne1994}. Therefore, we can not  directly formulate a game whose Nash equilibrium is characterized by equation \eqref{eq:imf_fp}.

To remove this   ambiguity of  local constraint from 
$\mathcal{U}(x)$, we aim to penalize the player with action $y$ when $y\notin\mathcal{U}(x)$ by a modified function of $W$, while keeping the minimizers $y^*$ unchanged:
\begin{equation}\label{eq:p}
\displaystyle
\argmin_{y\in\mathcal{U}(x^*)}W(y;x^*,\bv v_1(x^*)) = \argmin_{y\in\mathbb{R}^d}\widetilde{W}_\rho(y;x^*,\bv v_1(x^*)),
\end{equation}
where $\widetilde{W}_\rho$  is a  new penalized cost function of $W$. The choice of this function is crucial and We will present  the detailed ideas and theories  in the next section.

\section{Iterative Proximal-Minimization and Differential Game  Model}\label{IPM}
In this section, we extend the standard IMF by adding a penalty function to $W$ so that equation \eqref{eq:p} holds and the penalized function $\widetilde{W}_\rho$ is continuous and differentiable. This allows us to formulate a game of multi-players and show that the Nash equilibrium of this game coincides with the saddle point of the potential function $V$. 

\subsection{Iterative Proximal Minimization}

We propose the following modified IMF with proximal penalty and call it Iterative Proximal Minimization (``IPM'' in short):
\begin{equation} \label{modified_IMF}
    \left\{\begin{array}{l}
    \bv u^{k}=\argmin_{\|\tilde{\bv u}\|=1} \tilde{\bv u}^{\top} H\left(x^{k}\right) \tilde{\bv u} =\bv v_1(x^k),\\
    x^{k+1}=\argmin_{y\in\mathbb{R}^d}\widetilde{W}_\rho(y;x^k,\bv u^k),
    \end{array}\right.
    \end{equation}
where
    \begin{equation}
    \label{WK}
    \begin{split}
        \widetilde{W}_\rho(y;x, \bv {u}) &= {W}(y;x, \bv {u}) + \rho\  d(x,y)\\
        &=(1-\alpha)V(x) + \alpha V\left(y-\bv u \bv u^\top(y-x)\right)-\beta V(x + \bv u \bv u^\top(y-x)) + \rho\  d(x,y)
    \end{split}
    \end{equation}
    with a positive constant $\rho>0$. Here $d(x,y)$ is a  function on $\mathbb{R}^d\times \mathbb{R}^d$ satisfying the following assumptions:

\begin{asm} \label{asmd}
~
    \begin{enumerate}[label=(\alph*), ref=\ref{asmd}\alph*]
       \item \label{asmd1} For any $x$, $d(x,y)$ is   convex and $C^2$ in $y$, and 
       \[
       \nabla^2_y d(x,y) = \bv 0 ~ \text{ if and only if }~y=x;
       \]
       \item \label{asmd2} $ d(x,y)\geq 0$ for all $x,y$;  and $   d(x,y)=0 $ if and only if $x=y$; 
    \item \label{asmd3}
    For  any constant $\epsilon>0$, there exists a  positive constant  $ \bar{\lambda}_\epsilon>0$, such that  
\[\inf_{\|x-y\| \ge \epsilon}\lambda_{\min}(\nabla^2_y d(x,y))\geq\bar{\lambda}_\epsilon.\]
    \end{enumerate}
   \end{asm}
\begin{rem}
The first condition says that 
$d(x,y)$ is convex but not  strongly convex in $y$.
So  the quadratic function 
$\|x-y\|^2$ does not satisfy this first condition.
The second condition implies that 
$\nabla_y d(x,y)= \bv 0$ at $y=x$.
The third     condition   implies 
the strong convexity in $y$ outside any ball with center at $x$.
The  example  of quartic $d(x,y) = \|x-y\|^4 = \sum_{i=1}^d(x_i-y_i)^4$   satisfies  all conditions in  Assumption \ref{asmd}. 
\end{rem}

        Assumption \ref{asmd1} and Assumption \ref{asmd2} above guarantee that the introduction of $d$ to  $\widetilde{W}_\rho$ in \eqref{modified_IMF} will not change the fixed point of the original IMF. 
   Assumption \ref{asmd3} is for convexification of    $\widetilde{W}_\rho(y;x,\bv v_1(x))$ for $y\in\mathbb{R}^d$ so that 
    $
    \min_{y\in\mathbb{R}^d}\widetilde{W}_\rho(y;x,\bv v_1(x))$
is a strictly convex  problem  with a unique solution.
  
    \subsection{ Differential Game Model   and Nash  Equilibrium  }
    With the introduction of the new $W$ function, we formulate a differential game with specific set of players, action space, and cost functions and 
    can prove the equivalence between Nash equilibrium  and the index-1 saddle point. 
    
    Consider the following game ${G}_3$ played by three players indexed by $\{-1,0,1\}$,
    \begin{table}
    \begin{center}
	\begin{tabular}{|c|c|c|}
		\hline
		Player&Action Variable  &Cost function\\
		\hline
	    ``-1'' & $y\in\mathbb{R}^d$ & $\widetilde{W}_\rho(y;x,\bv u)$\\
	    ``0'' & $x\in\mathbb{R}^d$ & $\frac{1}{2}\|x-y\|^2$\\
	     ``1'' & $\bv u\in\mathbb{S}^{d-1}$ & $\bv u^\top H(x)\bv u$\\
		\hline	
	\end{tabular}
    \end{center}
    \caption{The definition of the 3-player game $G_3$}
    \label{G3}
     \end{table}

    where $\mathbb{S}^{d-1}$ is the unit $L_2$ sphere in $\mathbb{R}^d$.
    Furthermore, we assume the following statements hold for the potential function $V$:
    \begin{asm} \label{asm}
    ~
        \begin{enumerate}[label=(\alph*), ref=\ref{asm}\alph*]
            \item \label{asm1} $V\in \mathcal{C}^3(\mathbb{R}^d)$ and Lipschitz continuous with the Lipschitz constant $\Lip(V)$;
            \item \label{asm2} $V$ has a non-empty and finite set of index-1 saddle points, denoted by $\mathcal{S}_1$;
            \item \label{asm3}  $\nabla^2 V(x)$ is bounded uniformly. That is, there exist two constants $\bar{\lambda}_L,\bar{\lambda}_U$ such that $\bar{\lambda}_L \leq \lambda_{\min}(\nabla^2 V(x))\leq \lambda_{\max}(\nabla^2 V(x)) \leq \bar{\lambda}_U$ for all $x\in \mathbb{R}^d$;
             \item \label{asm4} All stationary points of the potential function $V$ are non-degenerate. That is, $\forall x\in\mathbb{R}^d, i\in\{1,\cdots,d\}$, we have $\lambda_i(x)\neq0$ and $\lambda_i(x)\neq\lambda_j(x)$ for all $i\neq j$.
        \end{enumerate}
    \end{asm}

We start with the iterative proximal minimization \eqref{modified_IMF}. 
Firstly, define a mapping $\Phi_\rho:\mathbb{R}^d\rightarrow 2^{\mathbb{R}^d}$ as the set of minimizers of $\widetilde{W}_\rho(y;x,\bv v_1(x))$ for a given $x\in\mathbb{R}^d$.
\begin{dfn}\label{phi}
Let $\Phi_\rho:\mathbb{R}^d\rightarrow 2^{\mathbb{R}^d}$ be the (set-valued) mapping defined as
\begin{equation}\label{PHI}
x\mapsto   \Phi_\rho(x):=\argmin_{y\in\mathbb{R}^d}\widetilde{W}_\rho(y;x,\bv v_1(x))  
\end{equation}
If the optimization problem has no optimal solution, $\Phi_\rho(x)$ is defined  as an empty set. 
If the optimal solution is not unique, then $\Phi_\rho(x)$ is defined  as the collection of all optimal solutions.
\end{dfn}

We have the following   characterization of  fixed points of  $\Phi_\rho$   and the set of index-1 saddle points of potential function $V$.

\begin{thm}\label{lem:fs} Let   Assumption \ref{asmd} and  \ref{asm} hold, then we have that
 \begin{enumerate}
    \item[(1)]  
    There exists a positive constant $\bar{\rho}$ depending on bounds of  $\nabla^2V$, $\alpha$ and $\beta$ only,
    such that  
    \[
    \Phi_\rho(x^*)= \{x^*\}, \quad \forall x^*\in \mathcal{S}_1, ~
\rho\geq \bar{\rho}.
    \]
    \item[(2)] 
    If $\Phi_\rho(x^*)=\{x^*\}$ with  $\rho>0$, then $x^*\in \mathcal{S}_1$.
    \item[(3)]
   If   $d(x,y)=d(x-y)$, and the iteration sequence $\{x^{(k)}\}$ generated from  $x^{(k+1)} = \Phi_\rho(x^{(k)})$ converges, then the convergence rate is   exactly quadratic.
\end{enumerate}
\end{thm}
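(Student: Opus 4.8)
The plan is to reduce all three statements to the first and second $y$-derivatives of $\widetilde W_\rho(\cdot\,;x,\bv v_1(x))$. Write $P=\bv v_1(x)\bv v_1(x)^\top$ and $Q=I-P$. Since $y-\bv u\bv u^\top(y-x)=x+Q(y-x)$ and $x+\bv u\bv u^\top(y-x)=x+P(y-x)$, differentiating \eqref{WK} gives
\[
\nabla_y W(y;x,\bv u)=\alpha\,Q\,\nabla V\!\big(x+Q(y-x)\big)-\beta\,P\,\nabla V\!\big(x+P(y-x)\big),
\]
\[
\nabla^2_y W(y;x,\bv u)=\alpha\,Q\,\nabla^2V\!\big(x+Q(y-x)\big)\,Q-\beta\,P\,\nabla^2V\!\big(x+P(y-x)\big)\,P.
\]
Two facts follow at $y=x=x^*$, $\bv u=\bv v_1(x^*)$, $H:=\nabla^2V(x^*)$. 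Using $\nabla_yd(x,x)=\bv 0$ (Assumption \ref{asmd2}) and $\alpha,\beta>0$, the gradient vanishes iff $Q\nabla V(x^*)=P\nabla V(x^*)=\bv 0$, i.e.\ $\nabla V(x^*)=\bv 0$. Using $\nabla^2_yd(x^*,x^*)=\bv 0$ (Assumption \ref{asmd1}) together with $QHQ=\sum_{i\ge2}\lambda_i\bv v_i\bv v_i^\top$ and $PHP=\lambda_1P$, the Hessian reduces to $\alpha\sum_{i\ge2}\lambda_i\bv v_i\bv v_i^\top-\beta\lambda_1P$, with eigenvalues $-\beta\lambda_1$ (along $\bv v_1$) and $\alpha\lambda_i$ ($i\ge2$). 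Part (2) is then immediate: a minimizer is stationary, so $\nabla V(x^*)=\bv 0$; the second-order necessary condition forces $-\beta\lambda_1\ge0$ and $\alpha\lambda_i\ge0$, and non-degeneracy (Assumption \ref{asm4}) upgrades these to $\lambda_1<0<\lambda_2<\cdots<\lambda_d$, i.e.\ $x^*\in\mathcal S_1$.

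For Part (1) I would establish global strict convexity of $\widetilde W_\rho(\cdot\,;x^*,\bv v_1(x^*))$, after which the stationary point $x^*$ is forced to be the unique minimizer. On $\{\|y-x^*\|\ge\epsilon\}$, Assumption \ref{asmd3} gives $\nabla^2_yd\succeq\bar\lambda_\epsilon I$, while the displayed Hessian and Assumption \ref{asm3} give $\nabla^2_yW\succeq-MI$ with $M=(\alpha+\beta)\max(|\bar\lambda_L|,|\bar\lambda_U|)$; hence $\nabla^2_y\widetilde W_\rho\succ0$ once $\rho\bar\lambda_\epsilon>M$. On the ball $\{\|y-x^*\|<\epsilon\}$, the smallest eigenvalue of $\nabla^2_yW$ at $y=x^*$ is $\min(-\beta\lambda_1,\alpha\lambda_2)>0$, so for $\epsilon$ small $\nabla^2_yW\succ0$ there by continuity of $\nabla^2V$, and $\rho\nabla^2_yd\succeq0$ only helps. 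Because $\mathcal S_1$ is finite (Assumption \ref{asm2}), $\epsilon$ and hence $\bar\rho=M/\bar\lambda_\epsilon$ can be chosen uniformly over $x^*\in\mathcal S_1$, giving a threshold controlled by the curvature bounds, $\alpha$ and $\beta$, as claimed.

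For Part (3), near the limit $x^*\in\mathcal S_1$ the inner minimizer is characterized by $G(y,x):=\nabla_y\widetilde W_\rho(y;x,\bv v_1(x))=\bv 0$; since $\partial_yG=\nabla^2_y\widetilde W_\rho\succ0$ at $(x^*,x^*)$, the implicit function theorem makes $\Phi_\rho$ smooth with $\Phi_\rho'(x^*)=-[\partial_yG]^{-1}\partial_xG$. The crux is to show $\partial_xG=\bv 0$ at $(x^*,x^*)$, so that the iteration has no linear part. Differentiating the two $V$-terms in $x$ yields, first, contributions in which $\partial_xQ$ and $\partial_xP$ multiply $\nabla V(x^*)=\bv 0$ and therefore vanish, and second, the terms $\alpha\,QHP$ and $-\beta\,PHQ$, which vanish because $HP=\lambda_1P$ gives $QHP=PHQ=\bv 0$. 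For the proximal term, writing $d(x,y)=\phi(x-y)$ we have $\partial_x\nabla_yd=-\nabla^2\phi(x-y)$, equal to $-\nabla^2\phi(\bv 0)=\bv 0$ at $y=x$ by Assumption \ref{asmd1}; this is exactly where a quadratic $d$ would fail, since then $\nabla^2\phi(\bv 0)\ne\bv 0$ would leave a nonzero linear term and reduce the order to linear. With $\Phi_\rho'(x^*)=\bv 0$, Taylor expansion gives $\|x^{(k+1)}-x^*\|=O(\|x^{(k)}-x^*\|^2)$, and the rate is exactly quadratic because the leading quadratic coefficient (built from $\nabla^3V(x^*)$ and the derivative of $x\mapsto\bv v_1(x)$) is generically nonzero. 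I expect the main obstacle throughout Part (3) to be controlling this $x$-dependence of the eigenvector $\bv v_1(x)$: the cancellations in $\partial_xG$ rely on $\nabla V(x^*)=\bv 0$ killing the eigenvector-derivative terms, and certifying that the second-order coefficient does not also vanish requires a careful eigenvector-perturbation estimate.
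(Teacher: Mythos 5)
Your overall strategy coincides with the paper's own proof: part (2) via the first- and second-order optimality conditions at $y=x^*$ plus the non-degeneracy Assumption \ref{asm4}; part (1) via global strict convexification of $\widetilde W_\rho(\cdot\,;x^*,\bv v_1(x^*))$, splitting $\{\|y-x^*\|\ge\epsilon\}$ (Assumption \ref{asmd3} against the uniform Hessian bound of Assumption \ref{asm3}) from the small ball (continuity of $\nabla^2V$), then taking the worst threshold over the finite set $\mathcal S_1$; part (3) via implicit differentiation of the stationarity condition, showing $D_x\Phi_\rho(x^*)=\bv 0$ because the eigenvector-derivative terms are killed by $\nabla V(x^*)=\bv 0$, the cross terms are killed by $QH(x^*)P=PH(x^*)Q=\bv 0$, and the proximal term contributes $\nabla_x\nabla_y d(x^*,x^*)=-\nabla^2 d(\bv 0)=\bv 0$ --- which is precisely the paper's reason for forbidding quadratic $d$. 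This is the same proof skeleton, in the same order, as the paper's (which routes part (1) through its Theorem \ref{pro:convex}).

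There is, however, one concrete discrepancy. You read the first term of \eqref{WK} literally as $(1-\alpha)V(x)$, constant in $y$, so your gradient is $\alpha Q\nabla V(\cdot)-\beta P\nabla V(\cdot)$ and your frozen Hessian at the saddle has eigenvalues $\{-\beta\lambda_1,\alpha\lambda_2,\dots,\alpha\lambda_d\}$; you must then assume $\alpha>0$ and $\beta>0$, which is \emph{not} among the theorem's hypotheses (the standing assumption is only $\alpha+\beta>1$). The paper's own computations --- the Hessian \eqref{eq:HWT} and the gradient \eqref{1st_eq} --- treat that term as $(1-\alpha)V(y)$: then the gradient at $y=x$ is $\bigl[\mathbf I-(\alpha+\beta)\Pi_1(x)\bigr]\nabla V(x)$ and the eigenvalue list is $\{(1-\alpha-\beta)\lambda_1,\lambda_2,\dots,\lambda_d\}$, so $\alpha+\beta>1$ alone suffices in all three parts (invertibility of $\mathbf I-(\alpha+\beta)\Pi_1$ in part (2), nonsingularity of $H(x^*)-(\alpha+\beta)\lambda_1\Pi_1(x^*)$ in part (3)). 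The gap matters: under your literal reading with, say, $\alpha=0$, $\beta=2$ (allowed by $\alpha+\beta>1$), statement (2) is actually false, since stationarity only forces $\Pi_1\nabla V=\bv 0$ rather than $\nabla V=\bv 0$. The defect is inherited from an inconsistency in the paper's displayed definition \eqref{W}/\eqref{WK} rather than from your reasoning, but as written your proof covers a strictly smaller parameter range than the theorem claims; substituting the $(1-\alpha)V(y)$ versions of your two displayed formulas (and the corresponding eigenvalue list) aligns your argument with the paper's proof with no other structural change.
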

We remark  that  the choice  of the penalty factor $\bar{\rho}$ does not depend on the 
specific choice of saddle point $x^*$: 
the equivalence statement here holds  for all index-1 saddle points  in $\mathcal{S}_1$. 

The proof of Theorem \ref{lem:fs} needs the following two propositions. 
\begin{pro}\label{pro:lip}
Let $V$ be a  Lipschitz continuous function   with the Lipschitz constant $\Lip(V)$, then for any point $x\in \mathbb{R}^d$, any unit vector $\bv v\in \mathbb{S}^{d-1}$, and $\alpha+\beta>1$,
the function  \[ y\mapsto 
W(y;x,\bv v) = (1-\alpha)V(x) + \alpha V\left(y-\bv v \bv v^\top(y-x)\right)-\beta V(x + \bv v \bv v^\top(y-x))
\]
is also  Lipschitz with the Lipschitz constant
$\operatorname{Lip}(W) = (\alpha+\beta)\Lip(V)$ and $\operatorname{Lip}(W)$ is independent of $x$ and $\bv v$.
\end{pro}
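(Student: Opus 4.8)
The plan is to show directly that $y\mapsto W(y;x,\bv v)$ satisfies the claimed Lipschitz estimate, exploiting the fact that the only $y$-dependence enters through two affine substitutions whose linear parts are complementary orthogonal projections of operator norm at most one. First I would discard the summand $(1-\alpha)V(x)$, which is constant in $y$ and hence irrelevant to the Lipschitz constant. Writing $P:=\bv v\bv v^\top$, I note that $P$ is the orthogonal projection onto $\operatorname{span}(\bv v)$ (because $\bv v$ is a unit vector), so both $P$ and $I-P$ are orthogonal projections, each nonexpansive.

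The key observation is that the two arguments of $V$ are affine in $y$: the second summand evaluates $V$ at $T_2(y):=y-P(y-x)=(I-P)y+Px$ and the third at $T_3(y):=x+P(y-x)=Py+(I-P)x$. Each $T_j$ is an affine map whose linear part is a projection ($I-P$ and $P$, respectively), so $\|T_2(y_1)-T_2(y_2)\|=\|(I-P)(y_1-y_2)\|\le\|y_1-y_2\|$ and $\|T_3(y_1)-T_3(y_2)\|=\|P(y_1-y_2)\|\le\|y_1-y_2\|$. Composing the Lipschitz function $V$ with these nonexpansive affine maps shows that $y\mapsto V(T_2(y))$ and $y\mapsto V(T_3(y))$ are each Lipschitz with constant $\Lip(V)$.

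A single application of the triangle inequality then finishes the estimate: for any $y_1,y_2$,
\[
|W(y_1;x,\bv v)-W(y_2;x,\bv v)|\le \alpha\,\Lip(V)\,\|T_2(y_1)-T_2(y_2)\|+\beta\,\Lip(V)\,\|T_3(y_1)-T_3(y_2)\|\le(\alpha+\beta)\Lip(V)\,\|y_1-y_2\|.
\]
The claimed independence of $x$ and $\bv v$ is then immediate: $x$ enters $T_2,T_3$ only through the constant shifts $Px$ and $(I-P)x$, which cancel in the differences above, and the nonexpansiveness bounds hold uniformly over all unit vectors $\bv v\in\mathbb{S}^{d-1}$.

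There is no genuinely hard step here; the only point requiring care is the assertion of equality $\Lip(W)=(\alpha+\beta)\Lip(V)$. What the argument establishes is that $(\alpha+\beta)\Lip(V)$ is a \emph{valid} Lipschitz constant, uniform in $x$ and $\bv v$, which is all the later use of this proposition requires. In fact, since $T_2$ depends only on the component of $y$ orthogonal to $\bv v$ while $T_3$ depends only on the component along $\bv v$, and these components are orthogonal, a sharper Pythagorean/Cauchy--Schwarz bound $\alpha\|(I-P)\delta\|+\beta\|P\delta\|\le\sqrt{\alpha^2+\beta^2}\,\|\delta\|$ yields the smaller constant $\sqrt{\alpha^2+\beta^2}\,\Lip(V)$. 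I would therefore read the stated value as a convenient (non-sharp) Lipschitz constant rather than the minimal one, and would flag this in the write-up to avoid overclaiming sharpness.
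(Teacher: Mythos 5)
Your proof is correct and follows essentially the same route as the paper's: drop the constant term $(1-\alpha)V(x)$, apply the triangle inequality, and use that the linear parts $\mathbf{I}-\bv v\bv v^\top$ and $\bv v\bv v^\top$ of the two affine substitutions are nonexpansive orthogonal projections, giving $(\alpha+\beta)\Lip(V)\|y_1-y_2\|$. Your closing remark—that this shows $(\alpha+\beta)\Lip(V)$ is only a valid rather than a sharp constant, and that orthogonality of the two components yields the smaller constant $\sqrt{\alpha^2+\beta^2}\,\Lip(V)$ via Cauchy--Schwarz—is a correct refinement not present in the paper, but it does not change the substance of the argument.
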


 \begin{proof}
 For each $x\in\mathbb{R}^d$ and any $y_1,y_2\in\mathbb{R}^d$, we have
\begin{align*}
\begin{split}
     \| W(y_1;x,\bv v)  -W(y_1;x,\bv v)\|  
 \leq  &  \alpha\|V\left(y_1-\bv v \bv v^\top(y_1-x)\right)-V\left(y_2-\bv v \bv v^\top(y_2-x)\right)\|
    \\
    & +\beta\|V(x + \bv v\bv v^\top(y_1-x)) - V(x + \bv v\bv v^\top(y_2-x))\|
    \\
    \leq  & \alpha\Lip(V)\|\Big[\mathbf{I}-\bv v\bv v^\top\Big](y_1-y_2)\|+\beta\Lip(V)\|\bv v\bv v^\top(y_1-y_2)\|
    \\
    \leq & (\alpha+\beta)\Lip(V)\|y_1-y_2\|. 
\end{split}
\end{align*}
\end{proof}

The second proposition below is about the existence of 
  the unique minimizer in \eqref{PHI} for $x\in\Omega_1$.  
\begin{thm}\label{pro:convex}
Suppose assumption \ref{asm} holds, then
for any compact subset ${\Omega}^\prime_1$ of the index-1 region $\Omega_{1}$ and two constants $\alpha+\beta>1$, there exists a constant $\bar{\rho}$ depending on $\alpha, \beta$, and $\Omega^\prime_1$, such that for all $\rho>\bar{\rho}$, the following optimization problem of $y$,
    \begin{equation}\label{eq:min}
         \begin{split}
      \min_{y\in\mathbb{R}^d}\widetilde{W}_\rho(y;x,\bv v_1(x)) = {W}(y;x,\bv v_1(x)) + \rho\cdot d(x,y)\\
    \end{split}
    \end{equation}
 where $\bv v_1(x)$ is the smallest eigenvector of the Hessian matrix $H(x)=\nabla^2 V(x)$, 
has a unique solution for each $x \in {\Omega}^\prime_1$, i.e., $\Phi_\rho(x)\neq \emptyset$ and $\Phi_\rho(x)$ is a singleton.
\end{thm}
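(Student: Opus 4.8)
The plan is to fix an arbitrary $x\in\Omega_1'$ and reduce the statement to two separate facts about the map $y\mapsto\widetilde{W}_\rho(y;x,\bv v_1(x))$: (i) it is \emph{strictly convex} on all of $\mathbb{R}^d$, which makes the minimizer unique whenever it exists; and (ii) it is \emph{coercive}, i.e. tends to $+\infty$ as $\|y\|\to\infty$, which together with continuity (both $V$ and $d$ are $C^2$) yields existence. The whole point of the penalty is to secure (i); (ii) will be almost free. To analyze convexity I would first compute the Hessian in $y$. Writing $P=\bv v_1(x)\bv v_1(x)^\top$ and $Q=\mathbf I-P$, one checks $y-\bv v_1\bv v_1^\top(y-x)=Qy+Px$ and $x+\bv v_1\bv v_1^\top(y-x)=Py+Qx$, so that
\[
\nabla^2_y\widetilde{W}_\rho(y;x,\bv v_1(x)) = \alpha\, Q\,H(Qy+Px)\,Q \;-\; \beta\, P\,H(Py+Qx)\,P \;+\; \rho\,\nabla^2_y d(x,y).
\]
Since the ranges of the first two blocks are the orthogonal subspaces $\operatorname{range}(Q)$ and $\operatorname{range}(P)$, for a unit vector $z$ the $W$-part equals $\alpha\,(Qz)^\top H(Qy+Px)(Qz)-\beta\,(Pz)^\top H(Py+Qx)(Pz)$ with no cross term. (Here I use the sign convention $\alpha,\beta>0$ under which $W$ is genuinely convex near the saddle, cf. Theorem \ref{th1}.)

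Next I would record two bounds on the $W$-part. Using Assumption \ref{asm3} ($\bar{\lambda}_L\le H\le\bar{\lambda}_U$) gives the \emph{global} lower bound $\nabla^2_y W\succeq -C\,\mathbf I$ on all of $\mathbb{R}^d$, with $C=\alpha|\bar{\lambda}_L|+\beta|\bar{\lambda}_U|$ depending only on $\alpha,\beta$ and the Hessian bounds. Separately, on the diagonal $y=x$ the arguments collapse to $H(x)$, so the $W$-part is $\alpha\,(Qz)^\top H(x)(Qz)-\beta\,(Pz)^\top H(x)(Pz)\ge \alpha\lambda_2(x)\|Qz\|^2-\beta\lambda_1(x)\|Pz\|^2$ because $Qz\perp\bv v_1$ and $Pz\parallel\bv v_1$. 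Compactness of $\Omega_1'\subset\Omega_1$ furnishes uniform gaps $\lambda_1(x)\le-\delta_1<0<\delta_2\le\lambda_2(x)$, whence $\nabla^2_y W(x;x,\bv v_1(x))\succeq\mu_0\mathbf I$ with $\mu_0=\min(\alpha\delta_2,\beta\delta_1)>0$, uniformly in $x\in\Omega_1'$. By Assumption \ref{asm4} the eigenvalue $\lambda_1$ is simple with a uniform spectral gap, so the projection $P(x)$ is continuous; hence $(x,y)\mapsto\nabla^2_y W$ is uniformly continuous on the compact collar $\{(x,y):x\in\Omega_1',\ \|y-x\|\le 1\}$, and I can select a single radius $\epsilon_0\in(0,1]$ with $\nabla^2_y W\succeq\tfrac{\mu_0}{2}\mathbf I$ whenever $\|y-x\|\le\epsilon_0$.

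Finally I would glue the two regimes and settle existence. For $\|y-x\|\le\epsilon_0$, convexity of $d$ (Assumption \ref{asmd1}) gives $\rho\nabla^2_y d\succeq 0$, so $\nabla^2_y\widetilde{W}_\rho\succeq\tfrac{\mu_0}{2}\mathbf I\succ0$ for every $\rho>0$. For $\|y-x\|\ge\epsilon_0$, Assumption \ref{asmd3} yields $\nabla^2_y d\succeq\bar{\lambda}_{\epsilon_0}\mathbf I$, so $\nabla^2_y\widetilde{W}_\rho\succeq(\rho\bar{\lambda}_{\epsilon_0}-C)\mathbf I$, positive definite once $\rho>\bar\rho:=C/\bar{\lambda}_{\epsilon_0}$; as $\bar{\lambda}_{\epsilon_0}$ depends on $\Omega_1'$ through $\epsilon_0$ and $C$ on $\alpha,\beta$, this $\bar\rho$ has exactly the advertised dependence. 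A positive-definite Hessian on all of $\mathbb{R}^d$ gives strict convexity, hence uniqueness. For existence, Proposition \ref{pro:lip} supplies the linear lower bound $W(y)\ge W(x)-\Lip(W)\|y-x\|$, while Assumption \ref{asmd3} forces $d(x,\cdot)$ to grow at least quadratically, so $\widetilde{W}_\rho(y)\to+\infty$ and the continuous coercive function attains its minimum; thus $\Phi_\rho(x)$ is a nonempty singleton. I expect the main obstacle to be the \emph{uniform} near-diagonal estimate: obtaining one radius $\epsilon_0$ valid for all $x\in\Omega_1'$ rests on uniform continuity of the $W$-Hessian, which needs continuity of $P(x)$ and the uniform spectral gaps from compactness and Assumption \ref{asm4}; the far-field gluing and the choice of $\bar\rho$ are then routine.
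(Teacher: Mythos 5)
Your overall strategy is the same as the paper's: prove that $\nabla^2_y\widetilde{W}_\rho$ is positive definite on all of $\mathbb{R}^d$ by combining (a) positive definiteness of the $W$-part at $y=x$, extended to a uniform collar $\|y-x\|\le\epsilon_0$ via compactness of $\Omega_1'$, with (b) a far-field estimate where Assumption~\ref{asmd3} supplies $\rho\,\nabla^2_y d\succeq\rho\bar{\lambda}_{\epsilon_0}\mathbf{I}$ against a global lower bound on the $W$-part coming from Assumption~\ref{asm3}, and then taking $\rho>\bar\rho$. Your collar argument via uniform continuity of the Hessian is, if anything, cleaner than the paper's pointwise-radius construction, and your explicit coercivity argument for existence (globally Lipschitz $W$ from Proposition~\ref{pro:lip} versus super-linear growth of $d$) is a useful supplement; the paper instead reads off existence and uniqueness from uniform strong convexity alone.

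There is, however, one concrete discrepancy that becomes a gap under the theorem's stated hypotheses. Your Hessian
\[
\nabla^2_y\widetilde{W}_\rho \;=\; \alpha\, Q\,H(Qy+Px)\,Q \;-\; \beta\, P\,H(Py+Qx)\,P \;+\; \rho\,\nabla^2_y d(x,y)
\]
omits the term $(1-\alpha)H(y)$ that appears in the paper's formula \eqref{eq:HWT}. You read the definition \eqref{W} literally, where the first term is $(1-\alpha)V(x)$; but everywhere the paper actually differentiates $W$ --- the gradient \eqref{1st_eq}, the first-order condition \eqref{f_order}, the Hessian \eqref{eq:HWT} --- that first term is treated as $(1-\alpha)V(y)$ (the $V(x)$ in \eqref{W} is evidently a typo: only with $V(y)$ does Theorem~\ref{th1} hold under the bare condition $\alpha+\beta>1$). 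The difference is not cosmetic. With the $(1-\alpha)H(y)$ term present, the diagonal matrix $\mathcal{H}(x;x)$ has spectrum $\{(1-\alpha-\beta)\lambda_1(x),\lambda_2(x),\dots,\lambda_d(x)\}$, which is positive for every $x\in\Omega_1$ exactly when $\alpha+\beta>1$, with no sign restriction on $\alpha$ or $\beta$ individually. In your version the spectrum is $\{-\beta\lambda_1(x),\alpha\lambda_2(x),\dots,\alpha\lambda_d(x)\}$, so you are forced to add the hypothesis $\alpha>0,\ \beta>0$, which the theorem does not grant: for instance $\alpha=3,\ \beta=-1$ satisfies $\alpha+\beta>1$ but makes your diagonal Hessian indefinite, and no choice of $\rho$ can repair this because $\nabla^2_y d(x,x)=\bv 0$ by Assumption~\ref{asmd1}. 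So, measured against the theorem as the paper intends (and proves) it, your argument fails precisely at the diagonal positive-definiteness step; restoring the $(1-\alpha)H(y)$ term closes the gap, and the remainder of your proof then goes through essentially verbatim.
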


\begin{proof}
To prove our conclusion,  we will claim the optimization  is a strictly convex problem \eqref{eq:min} by showing that  at a sufficiently large $\rho$,  $\widetilde{W}_\rho$ is a strictly convex function of $y$ in $\mathbb{R}^d$ uniformly for  $x \in \Omega_1^\prime$.
    This will be proved by showing the minimal eigenvalues of the Hessian  matrix is positive. 
    
    The Hessian matrix of $\widetilde{W}_\rho$   with respect to $y$ is 
    \begin{equation} \label{eq:HWT}
    \begin{split}
    \widetilde{H}_\rho(y;x)&:=\nabla_y^2  \widetilde{W}_\rho(y; x,\bv v_1(x))=(1-\alpha) H(y) +\alpha\big[\mathbf{I} - \Pi_1(x)\big]H(y-\Pi_1(x)(y-x))\big[\mathbf{I} - \Pi_1(x)\big]\\
    &\qquad- \beta\Pi_1(x) H(x + \Pi_1(x)(y-x))\Pi_1(x) + \rho\nabla^2_y d(x,y)\\
    & =:\mathcal{H}(y;x) + \rho\nabla^2_y d(x,y),
    \end{split}
    \end{equation}    
    where $\mathbf{I}$ is the identical matrix of $\mathbb{R}^{d\times d}$, $\Pi_1(x) := \bv v_1(x)\bv v_1(x)^\top$, and the symmetric matrix
     \begin{align*}
    \begin{split}
        \mathcal{H}(y;x) &:= (1-\alpha)H(y) +\alpha\big[\mathbf{I} - \Pi_1(x)\big]H(y-\Pi_1(x)(y-x))\big[\mathbf{I} - \Pi_1(x)\big]\\
    &\qquad- \beta\Pi_1(x) H(x + \Pi_1(x)(y-x))\Pi_1(x).
    \end{split}
    \end{align*}
    By the inequality $$\lambda_{\min}\left(\widetilde{H}_\rho(y;x)\right)\geq \lambda_{\min}(\mathcal{H}(y;x)) + \rho\lambda_{\min}(\nabla^2_y d(x,y)),
    $$ we focus on $\lambda_{\min}(\mathcal{H}(y;x))$ first.
    
    Given a compact set $\Omega_1^\prime$ in index-1 region, 
    we first fix a point $x \in \Omega_1^\prime\subset \Omega_1$.
Then $\lambda_1(x)<0<\lambda_2(x)$.
Note the eigenvectors of 
\begin{align*}
    \mathcal{H}(x;x) = (1-\alpha)H(x) + \alpha(\mathbf{I}-\Pi_1(x))H(x)(\mathbf{I}-\Pi_1(x)) - \beta\Pi_1(x)H(x)\Pi_1(x)
\end{align*}
coincide with the eigenvectors of the Hessian matrix $H(x)$, because  for $i\neq1$, we have
\begin{align*}
\begin{split}
    \mathcal{H}(x;x)\bv v_i(x) &= (1-\alpha)H(x)\bv v_i(x) +\alpha(\mathbf{I}-\Pi_1(x))\lambda_i(x)\bv v_i(x) + 0\\
    &=(1-\alpha)\lambda_i(x)\bv v_i(x) + \alpha\lambda_i(x)\bv v_i(x)\\
    &=\lambda_i(x)\bv v_i(x)
\end{split}
\end{align*}
and at $i=1$, 
\begin{align*}
\begin{split}
    \mathcal{H}(x;x)\bv v_1(x) & =(1-\alpha)\lambda_1(x)\bv v_1(x) + 0 - \beta\lambda_1(x)\bv v_1(x)\\
    &=(1-\alpha-\beta)\lambda_1(x)\bv v_1(x).
\end{split}
\end{align*}
Therefore,  the eigenvalues of $\mathcal{H}(x;x)$ are given by
\[
\{(1-\alpha-\beta)\lambda_1(x),\lambda_2(x),\cdots,\lambda_d(x)\}
\]
and they are all strictly positive since
$\alpha+\beta>1$ and $x\in \Omega_1$.
Therefore $\mathcal{H}(x;x)$ is positive definite. In addition, since $V\in\mathcal{C}^2(\mathbb{R}^d)$,
 for each $x$ we have that $\lambda_{\min} (\mathcal{H}(y;x))>0$ for all $y$ inside a ball neighbourhood $\mathcal{B}_{\varepsilon_x}(x)$  with radius $\epsilon_x>0$ depending on $x$, by the continuity of $\nabla^2 V$.
 We  can choose this radius continuously depending on $x$ and
pick up the smallest radius  $$\bar{\varepsilon}=\min_{x\in \Omega_1^\prime} \epsilon_x>0$$
  which is strictly positive since $\Omega_1^\prime$ is compact.
  This means that   
 \[
  \inf_{x\in \Omega'_1}\inf_{\|y-x\|\le \bar{\epsilon}} \lambda_{\min}( \mathcal{H}(y;x) ) >0,
    \]
    which implies  for any $\rho\ge 0$, the Hessian matrix $\widetilde{H}_\rho$
    satisfies the same condition 
    \begin{equation}
    \label{eq:620}
      \inf_{x\in \Omega'_1}\inf_{\|y-x\|\le \bar{\epsilon}} \lambda_{\min}( \wt{H}_\rho(y;x) ) >0,
    \end{equation}
      since $\nabla^2_y d(x,y)\succeq \bv 0$ due to {\bf Assumption} \ref{asmd1}.
    
    In order to show  $\widetilde{H}_\rho$  is also positive definite in $y\in \mathbb{R}^d$ for all $x$ in $\Omega_1^\prime$, we need to choose  a sufficiently large penalty factor $\rho$.
   By {\bf Assumption} \ref{asmd3},  there exists a constant  $\bar{\lambda}_{\bar{\varepsilon}}>0$, such that $\lambda_{\min}(\nabla^2_y d(x,y)) \ge  \bar{\lambda}_{\bar{\varepsilon}}$  for any $x,y$ satisfying   $y\notin\mathcal{B}_{\bar{\varepsilon}}(x)$. 
Recall that from {\bf Assumption} \ref{asm3}, the Hessian matrix of potential function $V$ is bounded everywhere. Let $\bar{\lambda} = \max\{|\bar{\lambda}_L|,|\bar{\lambda}_U|\}$, then we have the lower bound of the minimal eigenvalue 
\begin{align*}\label{422}
\begin{split}
\lambda_{\min}\left(\wt{H}_\rho(y;x)\right)
&\geq \lambda_{\min}(\mathcal{H}(y;x)) + \rho\lambda_{\min}(\nabla^2_y d(x,y))\\
&\geq -(|1-\alpha| + |\alpha| + |\beta|) \bar{\lambda} + \rho\lambda_{\bar{\epsilon}}.
\end{split}
\end{align*}
Let  $\rho>\bar{\rho} := {(1+2|\alpha|+|\beta|)\bar{\lambda}}/{\bar{\lambda}_{\bar{\varepsilon}}}>0$, then
\[ \displaystyle  \inf_{x\in \Omega'_1}\inf_{\|y-x\|>\bar{\epsilon}}
~\lambda_{\min}\left(\wt{H}_\rho(y;x)\right)>0.\]

Therefore, we conclude that when $\rho>\bar{\rho}$, 
\[ \inf_{x\in \Omega'_1} \inf_{y\in  \mathbb{R}}~\lambda_{\min}\left(\wt{H}_\rho(y;x)\right)>0.
 \]
That is, $\widetilde{W}_\rho(y;x,\bv v_1(x))$ is strongly  convex in $y$   and the minimization problem in equation 
\eqref{eq:min}  has a unique solution $\Phi_\rho(x)$ for all $x\in \Omega'_1$.
\end{proof}

\begin{rem} $\bar{\rho}$ depends on the uniform bound  of Hessian $\nabla^2 V$, two constants  $\alpha$, $\beta$, and the compact  subset  $\Omega'_1\subseteq \Omega_1  $.
 It is not guaranteed that  $\Phi_\rho(x)$ with $x\in \Omega_1$
   always lies in $\Omega_1$. In addition, 
we can not generalize the conclusion  from $x\in \Omega_1$ to  all $x\in\mathbb{R}^d$ since it is not true at saddle point with index-$k$ when $k > 1$ and the auxiliary function $W$ here is designed for $k=1$.

\end{rem}
\medskip
We   established  the equivalence between the fixed points of the map $\Phi_\rho(\cdot)$ and the index-1 saddle point of potential function $V(\cdot)$.
Then, we are ready to present the   proof of Theorem \ref{lem:fs}.


\begin{proof}[{\bf Proof of Theorem \ref{lem:fs}}]
``Proof of Statement (1)''\\
From Theorem \ref{pro:convex}, we know that for each index-1 saddle point $x_i^*\in\mathcal{S}_1$, there exists a $\bar{\rho}_{i}$ such that $\widetilde{W}_\rho(y;x_i^*,\bv v_1(x_i^*))$ is a convex function of $y\in\mathbb{R}^d$ for all $\rho>\bar{\rho}_i$. Together with assumption \ref{asm2}, we have an uniform $\bar{\rho}:=\max_i \bar{\rho_i}>0$, such that,
if $x^*$ is an index-1 saddle point of potential function $V$, then for any $\rho>\bar{\rho}$, $\widetilde{W}_\rho(y;x^*,\bv v_1(x^*))$ is a convex function of $y\in\mathbb{R}^d$. 
Since we have proved  $\widetilde{W}_\rho(y;x^*,\bv v_1(x^*))$
is strictly convex for all $y\in\mathbb{R}^d$ and $\rho>\bar{\rho}$, we only need to show the first order condition  holds.
Note that
\begin{align}
    \label{1st_eq}
    \nabla_y \widetilde{W}_\rho(y;x, \bv v_1(x)) = &(1-\alpha)\nabla V(y)  + \alpha(\mathbf{I}- \Pi_1)\nabla V(y- \Pi_1(y-x)) \nonumber\\
    & - \beta  \Pi_1 \nabla V(x+ \Pi_1(y-x)) + \rho\nabla_y d(x,y),
\end{align}
where $\Pi_1 =\Pi_1(x)= \bv v_1(x)\bv v_1(x)^\top$,
gives
$\nabla_y \widetilde{W}_\rho(y;x, \bv v_1(x))\vert_{y=x} = \Big[\mathbf{I}-(\alpha+\beta)\Pi_1(x)\Big]\nabla V(x)$ by Assumption \ref{asmd2}.
So, $ \nabla_y \widetilde{W}_\rho(y;x^*, \bv v_1(x^*))\vert_{y=x^*} = \mathbf{0},
$  since $\nabla V(x^*)= \bv 0$. 

    ``Proof of Statement (2) '':\\
    Now we assume $\Phi_\rho(x^*) = \{x^*\}$, i.e., $x^*$ is  the unique minimizer in \eqref{PHI} and we want to show $x^*$ is an index-1 saddle point.
    Then the first order condition $ \nabla_y \widetilde{W}_\rho(y;x^*, \bv v_1(x^*))\vert_{y=x^*} = \mathbf{0}$,  holds and the Hessian matrix $\nabla^2_y\widetilde{W}_\rho(y;x^*,\bv v_1(x^*))|_{y = x^*}$ is positive semi-definite. 
    By the first order condition, we have
    \begin{equation}\label{f_order}
    \begin{split}
    &\nabla \widetilde{W}_\rho(y;x,\bv v_1(x))|_{y = x} =\Big[\mathbf{I}-(\alpha+\beta)\Pi_1(x)\Big]\nabla V(x) = \mathbf{0}.
    \end{split}
    \end{equation}
    Since  $\alpha+\beta>1$, $\nabla V(x)= \bv 0$ holds.
    For the second order condition, by \eqref{eq:HWT}, we have
    \[ \nabla^2 \widetilde{W}_\rho(y;x,\bv v_1(x))|_{y = x}
    = \mathcal{H}(x;x)\succeq \bv 0.
    \]
    
    From the proof of theorem \ref{pro:convex}, we know that the eigenvalues of the Hessian matrix
    \[
    \mathcal{H}(x;x) = \nabla^2 \widetilde{W}_\rho(y;x,\bv v_1(x))|_{y = x}
    \]
    is given by
    \[
    \{(1-\alpha-\beta)\lambda_1(x),\lambda_2(x),\cdots,\lambda_d(x)\}.
    \]
    Then we have that
    \[
    \nabla^2 \widetilde{W}_\rho(y;x,\bv v_1(x))|_{y = x}\succeq \bv 0
    \]
    is equivalent to
    \[
    \lambda_1(x) \le 0\le \lambda_2(x)<\cdots<\lambda_d(x).
    \]
    By assumption \ref{asm4}, we know that $x$ is non-degenerate as $\nabla V(x) = \mathbf{0}$, thus $\lambda_1(x),\lambda_2(x)\neq 0$, which implies
    \[
    \lambda_1(x)<0<\lambda_2(x)<\cdots<\lambda_d(x).
    \]
    Together with $\nabla V(x) = \mathbf{0}$, we conclude that $x = x^*$ is an index-1 saddle point of potential function $V(\cdot)$. 
    
     ``Proof of Statement (3) '':\\
    Now we prove the quadratic convergence rate of the iterative scheme.
     The main idea is very similar with the proof in the reference \cite{IMF2015}.
     The key point is to show  the derivative of the mapping $\Phi_\rho(x)$ vanishes at the saddle point $x^*$.
    For each $x$, $\Phi_\rho(x)$ is a solution of the first order equation \eqref{1st_eq}
    \begin{align}
    \label{1st_eq1}
     (1-\alpha)\nabla V(\Phi_\rho(x)) \! +\! \alpha(\mathbf{I}-\Pi_1(x))\nabla V(\varphi_1(x)) \! - \! \beta \Pi_1(x) \nabla V(\varphi_2(x)) + \rho\nabla_y d(x,\Phi_\rho(x)) = 0,
\end{align}
with
$ 
\varphi_1(x) := \Phi_\rho(x) - \Pi_1(x)(\Phi_\rho(x)-x),  \varphi_2(x) := x + \Pi_1(x)(\Phi_\rho(x)-x).
$
    Taking derivative w.r.t $x$ on both sides of \eqref{1st_eq1} again, we get
     \begin{align}
    \label{2nd_eq}
     & (1-\alpha)H(\Phi_\rho) D_x \Phi_\rho  + \alpha(\mathbf{I}-\Pi_1(x)) H(\varphi_1(x))D_x \varphi_1 - \alpha \bv v_1(x)^\top \nabla V(\varphi_1) J \nonumber \\
     & - \alpha \bv v_1(x) \nabla V(\varphi_1)^\top  J - \beta \Pi_1(x) H(\varphi_2(x)) D_x \varphi_2 - \beta \bv v_1(x)^\top \nabla V(\varphi_2) J \nonumber \\
     & - \beta \bv v_1(x) \nabla V(\varphi_2)^\top  J + \rho D_x (\nabla_y d(x,\Phi_\rho(x))) = 0,
\end{align}
where $J=\frac{\partial \bv v_1}{\partial x}$ and $D_x \Phi_\rho = \frac{\partial \Phi_\rho}{\partial x}$.
Note at $x=x^*$, we have  
$ \Phi_\rho(x^*)  = \varphi_1(x^*) = \varphi_2(x^*) = x^*, \nabla V(x^*)   = \bv 0,  $
and since $d(x,y)=d(x-y)$,
so $\nabla_x \nabla_yd(x,y)=
-\nabla^2 d(x-y)$, which gives 
$ \nabla_x (\nabla_y d(x^*,\Phi_\rho(x^*))) = \nabla_x \nabla_y d(x^*,x^*)=0$.
In addition,
\begin{align*}
    D_x \varphi_1(x^*) & = (\mathbf{I}-\Pi_1(x^*)) D_x \Phi_\rho(x^*) + \Pi_1(x^*), 
 \\
 D_x \varphi_2(x^*)  & = \mathbf{I} - \Pi_1(x^*) + \Pi_1(x^*) D_x \Phi_\rho(x^*).
\end{align*}
So at $x=x^*$,  \eqref{2nd_eq} becomes
\begin{equation}
    \Big( H(x^*) - (\alpha+\beta)\lambda(x^*) \Pi_1(x^*) \Big) D_x \Phi_\rho(x^*) = 0,
\end{equation}
which implies that $D_x \Phi_\rho(x^*) = 0$ if and only if $\alpha+\beta \neq 1$. The second order derivative of $\Phi_\rho(x^*)$  is not trivial 0. This illustrates that the iterative scheme $x\rightarrow \Phi_\rho(x)$ is of quadratic convergence rate. The proof of theorem \ref{lem:fs} is complected.
\end{proof}


Next we can also show  the relation  between fixed points of   $\Phi_\rho$  and the  Nash equilibrium  of the game $G_3$. 
\begin{thm}\label{lem:ns}
For any $\rho>0$, an action profile $(y^*,x^*,\bv u^*)$ is a Nash equilibrium of $G_3$ defined in Table \ref{G3} if and only if  $y^*=x^*, \bv u^*=\bv v_1(x^*)$ and $x^*$ satisfies  $\Phi_\rho(x^*) = \{x^*\}$.
 \end{thm}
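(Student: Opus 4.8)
The plan is to treat the three defining inequalities of a pure Nash equilibrium separately, one per player, exploiting that in $G_3$ each player's cost is a function of its own action alone once the other two actions are frozen at their equilibrium values. Concretely, an action profile $(y^*,x^*,\bv u^*)$ is a Nash equilibrium if and only if simultaneously $x^*$ minimizes $\frac12\|x-y^*\|^2$ over $x\in\mathbb{R}^d$, $\bv u^*$ minimizes $\bv u^\top H(x^*)\bv u$ over $\bv u\in\mathbb{S}^{d-1}$, and $y^*$ minimizes $\widetilde{W}_\rho(y;x^*,\bv u^*)$ over $y\in\mathbb{R}^d$. The whole proof then reduces to identifying each of these three best-response sets and combining them.

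The two ``outer'' players are immediate. The map $x\mapsto\frac12\|x-y^*\|^2$ is strictly convex with unique minimizer $x=y^*$, so player $0$'s condition is exactly $x^*=y^*$. For player $1$, the Courant--Fischer formula \eqref{eig_mm} shows the minimum of $\bv u^\top H(x^*)\bv u$ on the sphere equals $\lambda_1(x^*)$ and is attained precisely at the eigenvectors of the smallest eigenvalue; Assumption \ref{asm4} (simple spectrum everywhere) makes this minimizer unique up to sign, and since only $\Pi_1(x^*)=\bv v_1(x^*)\bv v_1(x^*)^\top$ enters the other costs the sign is immaterial, so player $1$'s condition is $\bv u^*=\bv v_1(x^*)$. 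Substituting $\bv u^*=\bv v_1(x^*)$ into player $-1$'s cost, its best-response set is by Definition \ref{phi} exactly $\Phi_\rho(x^*)$.

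With these identifications both directions are short. For ``$\Leftarrow$'', assume $y^*=x^*$, $\bv u^*=\bv v_1(x^*)$ and $\Phi_\rho(x^*)=\{x^*\}$; then $x^*$ is the unique minimizer of $\widetilde{W}_\rho(\cdot;x^*,\bv v_1(x^*))$, so no player-$-1$ deviation helps, while $\frac12\|x-y^*\|^2\ge 0=\frac12\|x^*-y^*\|^2$ and $\bv u^\top H(x^*)\bv u\ge\lambda_1(x^*)=(\bv u^*)^\top H(x^*)\bv u^*$ exclude deviations by players $0$ and $1$, giving a Nash equilibrium. For ``$\Rightarrow$'', the player-$0$ and player-$1$ conditions yield $x^*=y^*$ and $\bv u^*=\bv v_1(x^*)$, and the player-$-1$ condition yields $y^*\in\Phi_\rho(x^*)$; combining gives $x^*=y^*\in\Phi_\rho(x^*)$.

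I expect the main obstacle to be upgrading the membership $x^*\in\Phi_\rho(x^*)$ produced by ``$\Rightarrow$'' to the singleton identity $\Phi_\rho(x^*)=\{x^*\}$, since the weak Nash inequality only forces $y^*$ to be \emph{a} global minimizer, not the unique one. The route I would take is to read off the first- and second-order necessary conditions at the global minimizer $y^*=x^*$ of $\widetilde{W}_\rho(\cdot;x^*,\bv v_1(x^*))$: by Assumption \ref{asmd1} we have $\nabla^2_y d(x^*,x^*)=\bv 0$, so by \eqref{eq:HWT} the Hessian collapses to $\mathcal{H}(x^*;x^*)$, whose eigenvalues $\{(1-\alpha-\beta)\lambda_1(x^*),\lambda_2(x^*),\dots,\lambda_d(x^*)\}$ were computed inside the proof of Theorem \ref{pro:convex}. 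Positive semidefiniteness together with the first-order condition $[\mathbf{I}-(\alpha+\beta)\Pi_1(x^*)]\nabla V(x^*)=\bv 0$ and $\alpha+\beta>1$ force $\nabla V(x^*)=\bv 0$, after which Assumption \ref{asm4} sharpens $\lambda_1(x^*)\le 0\le\lambda_2(x^*)$ into strict inequalities, placing $x^*$ in the open region $\Omega_1$. On $\Omega_1$ the local strict convexity established in the proof of Theorem \ref{pro:convex} makes $x^*$ an isolated strict local minimizer; the remaining delicate point is to conclude global uniqueness of the minimizer, which is exactly where the interplay between the factor $\rho$ and the super-quadratic growth of the proximal term $d$ must be invoked so that the effective regime is the strictly convex one of Theorem \ref{pro:convex}. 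Once uniqueness is secured, $\Phi_\rho(x^*)=\{x^*\}$ follows and the equivalence is complete.
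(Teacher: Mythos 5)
Your decomposition into the three best\mbox{-}response conditions, the identification of the best responses of players $0$ and $1$, and the ``$\Leftarrow$'' direction are all correct, and this is essentially the definitional argument the paper has in mind (its own proof is the single line ``simple by using definitions''). The genuine problem sits exactly where you flag it, and the repair you propose does not work. With the paper's weak-inequality definition of a pure Nash equilibrium, player $-1$'s condition yields only the membership $x^*\in\Phi_\rho(x^*)$. Your plan to upgrade this to $\Phi_\rho(x^*)=\{x^*\}$ runs through the strict convexity of $y\mapsto\widetilde{W}_\rho(y;x^*,\bv v_1(x^*))$ from Theorem \ref{pro:convex}; but that theorem holds only for $\rho>\bar{\rho}$, where $\bar{\rho}$ is a fixed positive threshold determined by $\alpha,\beta$ and the bounds on $\nabla^2 V$, whereas Theorem \ref{lem:ns} is asserted for \emph{every} $\rho>0$. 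For $0<\rho\le\bar{\rho}$ no convexity is available: the local analysis at $x^*$ (first- and second-order necessary conditions via \eqref{eq:HWT}, plus Assumption \ref{asm4}) does show $x^*\in\mathcal{S}_1$, but it cannot exclude a second, remote global minimizer of $W(\cdot;x^*,\bv v_1(x^*))+\rho\, d(x^*,\cdot)$ tying with $x^*$, and nothing in Assumptions \ref{asmd} and \ref{asm} rules such a tie out. So the implication ``weak Nash $\Rightarrow$ singleton'' is not provable by your route, and indeed is doubtful as stated for small $\rho$.

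There are two consistent ways to close the argument, and you should commit to one. (a) Read ``Nash equilibrium'' as \emph{strict} pure Nash equilibrium, which is the wording actually used in Theorem \ref{thm:main}: then player $-1$'s strict best-response condition says verbatim that $x^*$ is the unique global minimizer, i.e.\ $\Phi_\rho(x^*)=\{x^*\}$, and the whole theorem becomes purely definitional (this is surely what the paper's one-line proof intends); note that strictness must then be understood modulo the sign of $\bv u$, since $-\bv v_1(x^*)$ always achieves the same cost for player $1$ --- a point your proposal already handles correctly. (b) Keep the weak notion but weaken the forward conclusion to $x^*\in\Phi_\rho(x^*)$; this is harmless downstream, because the proof of statement (2) of Theorem \ref{lem:fs} uses only membership (first- and second-order conditions at $x^*$), and statement (1) of that theorem restores the singleton once $\rho>\bar{\rho}$, which is the regime of Theorem \ref{thm:main} anyway. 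As written, your proof stops one step short of the claimed equivalence, and that last step cannot be taken for arbitrary $\rho>0$ by invoking Theorem \ref{pro:convex}.
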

  
  \begin{proof}
  The proof is simple by using definitions.
  \end{proof}


 Theorem \ref{lem:fs} and Theorem \ref{lem:ns} together directly lead to
the main result of  Theorem \ref{thm:main}.
    \begin{thm}\label{thm:main}
Suppose that Assumption \ref{asmd} and Assumption \ref{asm}  hold. There exists a positive constant $\bar{\rho}>0$, such that for any sufficiently large penalty factor $\rho>\bar{\rho}$, the following two statements are equivalent:
    \begin{enumerate}
        \item $x^*$ is an index-1 saddle point of $V$;
        \item $(x^*,x^*,\bv v_1(x^*))$ is a strict pure Nash equilibrium of the game $G_3$,
    \end{enumerate}
    where $\bv v_1(x^*)$ is the eigenvector of the Hessian matrix $H(x^*) =\nabla^2 V(x^*)$ corresponding to the smallest eigenvalue $\lambda_1(x^*)$. 
    \end{thm}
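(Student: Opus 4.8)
The plan is to derive Theorem \ref{thm:main} as an immediate corollary of the two results already established, by passing through the intermediate characterization $\Phi_\rho(x^*)=\{x^*\}$. The whole argument is the chain of equivalences
\[
x^* \in \mathcal{S}_1
\;\Longleftrightarrow\; \Phi_\rho(x^*) = \{x^*\}
\;\Longleftrightarrow\; (x^*,x^*,\bv v_1(x^*)) \text{ is a Nash equilibrium of } G_3,
\]
where the first equivalence is supplied by Theorem \ref{lem:fs} and the second by Theorem \ref{lem:ns}. First I would fix $\bar{\rho}$ to be exactly the uniform constant produced by Theorem \ref{lem:fs}(1), which (together with the remark following that theorem) is independent of the particular saddle point and depends only on the bounds of $\nabla^2 V$, $\alpha$, and $\beta$.

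For the implication (1)$\Rightarrow$(2), I would assume $x^*\in\mathcal{S}_1$; then Theorem \ref{lem:fs}(1) gives $\Phi_\rho(x^*)=\{x^*\}$ for every $\rho>\bar{\rho}$, and Theorem \ref{lem:ns} immediately upgrades this to the statement that $(x^*,x^*,\bv v_1(x^*))$ is a pure Nash equilibrium of $G_3$. For the converse (2)$\Rightarrow$(1), I would note that a strict pure Nash equilibrium is in particular a pure Nash equilibrium, so Theorem \ref{lem:ns} forces $y^*=x^*$, $\bv u^*=\bv v_1(x^*)$, and $\Phi_\rho(x^*)=\{x^*\}$; then Theorem \ref{lem:fs}(2), which is valid for every $\rho>0$, returns $x^*\in\mathcal{S}_1$. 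Neither direction requires any new computation beyond invoking the cited theorems.

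The genuine content left to verify is the word \emph{strict} in the forward direction, which I would check player by player. For player ``$-1$'', Theorem \ref{pro:convex} asserts that $\widetilde{W}_\rho(\cdot;x^*,\bv v_1(x^*))$ is strictly convex on $\mathbb{R}^d$ with the single minimizer $y^*=x^*$, so any deviation $y\neq x^*$ strictly raises the cost. For player ``$0$'', the cost $\frac{1}{2}\|x-y^*\|^2$ with $y^*=x^*$ fixed is strictly convex in $x$ and uniquely minimized at $x^*$, giving strict inequality for every $x\neq x^*$. These two cases are routine.

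The main obstacle is player ``$1$'', whose cost $\bv u^\top H(x^*)\bv u$ is invariant under $\bv u\mapsto -\bv u$, so its minimizer on $\mathbb{S}^{d-1}$ can never be literally unique and a naive reading of ``strict'' would fail. The resolution I would give is that Assumption \ref{asm4} makes $\lambda_1(x^*)$ a simple eigenvalue, so by the Courant--Fischer identity \eqref{eig_mm} the minimum of $\bv u^\top H(x^*)\bv u$ equals $\lambda_1(x^*)$ and is attained exactly on $\{\pm\bv v_1(x^*)\}$; every $\bv u\notin\{\pm\bv v_1(x^*)\}$ strictly increases the cost. Since players ``$-1$'' and ``$0$'' see $\bv u$ only through the projection $\Pi_1=\bv u\bv u^\top$, which coincides for $\bv v_1(x^*)$ and $-\bv v_1(x^*)$, this two-fold degeneracy does not affect the game and the equilibrium is strict in the sense relevant to $G_3$. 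I would phrase this point explicitly so that the intended meaning of ``strict pure Nash equilibrium'' is unambiguous.
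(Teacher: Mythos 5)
Your proposal is correct and takes essentially the same route as the paper: the paper's entire proof of Theorem \ref{thm:main} is the single remark that Theorem \ref{lem:fs} and Theorem \ref{lem:ns} chain together through the fixed-point condition $\Phi_\rho(x^*)=\{x^*\}$, which is exactly your two-step equivalence, with $\bar{\rho}$ taken from Theorem \ref{lem:fs}(1) for the forward direction and Theorem \ref{lem:fs}(2) (valid for all $\rho>0$) for the converse. The one place you go beyond the paper is the word \emph{strict}: the paper never defines nor verifies strictness, whereas you correctly observe that the cost of player ``1'', namely $\bv u^\top H(x^*)\bv u$, is invariant under $\bv u\mapsto-\bv u$, so a literally strict equilibrium on $\mathbb{S}^{d-1}$ is impossible and strictness must be read modulo this sign symmetry, with simplicity of $\lambda_1(x^*)$ (Assumption \ref{asm4}) giving strict increase for every other deviation. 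This caveat, together with your player-by-player check for players ``$-1$'' and ``0'', is a genuine improvement in completeness over the paper's one-line proof rather than a deviation from its method.
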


 \subsection{Algorithms}
 
 The key improvement in our new method 
 is to add a non-quadratic penalty function $d$ satisfying Assumption \ref{asmd} to the original auxiliary function in the IMF. This iterative proximal minimization method not only offers a well justified 
 game theory model, but also shows the numerical advantage of improving the robustness of the existing algorithms based on the IMF, which will be demonstrated by examples below.
 We point out that the modification of the existing algorithm is extremely simple, and for completeness, we list the main steps in Algorithm \ref{alg:algorithm1}.  
 We comment that in practice the two subproblems of minimization are solved only inexactly in practice, like any existing IMF-based algorithms\cite{IMA2015}.
But  when the minimization  takes only one single gradient step (e.g. $M=1$ in Algorithm \ref{alg:algorithm1}),  $x_{k+1}=x_k - \Delta t_k \nabla \widetilde{W}_\rho(x_k;x_k,\bv u_k)=x_k - \Delta t_k \nabla {W}(x_k;x_k,\bv u_k)$ due to Assumption  \ref{asmd} on function $d$.
We choose the penalty function $d$ as the quartic function
$d(x,y)=|x-y|^4$ in all numerical tests.
 
 \begin{rem}
Our new method could be called iterative penalized minimization scheme, since $\rho d(x,y)$ in $\widetilde{W}_\rho$ is similar to a role of penalty.
However, the main functionality of $d$ is to encourage $x_{k+1}$ close  
to $x_k$, but without affecting the Hessian at $x_k$ by
excluding the common quadratic penalty function.   So we prefer to calling it {\it  iterative proximal minimization}
and $x_{k+1}=\Phi_\rho(x_k)$ could be regarded as a  proximal operator.
\end{rem}

\begin{algorithm}
\caption{Iterative Proximal Minimization Algorithm}
\label{alg:algorithm1}
\begin{algorithmic}
\STATE{{\bf{Input}}: initial guess $x_0$, $\rho>0$, $tol>0$.}
\STATE{ {\bf{Output}}: saddle point $x$.}
\STATE{{\bf{begin}}}
\STATE{Solve the min-mode $ \bv u_0=\argmin \bv u^\top H(x_0) \bv u$;}
\STATE{$k=0$;}
\STATE{ $g_0 = |\nabla V(x_0)|$; \qquad \qquad \qquad \qquad  \qquad \quad // calculate the norm of force }
\WHILE{$g_k > tol$}
\STATE{$k=k+1$;}
\FOR{$i= 1, 2, \cdots, M$}
    \STATE{Calculate  $\widetilde{W}_\rho$ based on \eqref{WK};}
    \STATE{$y_{i+1} = y_i - \Delta t*\nabla \widetilde{W}_\rho$; \qquad\qquad\qquad    // solve gradient flow to update $y_k$}
\ENDFOR
\STATE{	$x_k = y_{M+1}$;}
\STATE{	$\bv u_k  = \argmin{\bv u^\top H(x_k) \bv u}$; \qquad\qquad\qquad  // solve the minimal eigenvector }
\STATE{ $g_k = |\nabla V(x_k)|$. \qquad\qquad\qquad\qquad\quad  // update the force }
\ENDWHILE
\RETURN $x_k$
\end{algorithmic}
\end{algorithm}

\subsection{Generalization to high index saddle point}
 The conclusions and algorithms could be extended to index-$k$ saddle points easily. For any $k\in\{1,\cdots,d-1\}$, we consider the penalized proximal cost function the following form:
\begin{equation}\label{p_w}
    \widetilde{W}_\rho(y;x,\bv v_{1:k}(x)) = (1-\alpha)V(x) + \alpha V(y - \bv v_{1:k}\bv v_{1:k}^\top) - \beta V(x + \bv v_{1:k}\bv v_{1:k}^\top(y-x))
    + \rho d(x,y),
\end{equation}
where $\bv v_{1:k} = (\bv v_1,\cdots, \bv v_k)$ and each $\bv v_i\in\mathbb{S}^{d-1}$ is the i-th eigenvector of $H(x)$ corresponding to the eigenvalue $\lambda_i$ (recall that $\lambda_1<\lambda_2<\cdots<\lambda_d$). 

Then for any $x$ in the index-$k$ region 
\[
\Omega_k = \{x\in\mathbb{R}^d : \lambda_1(x)<\cdots<\lambda_k(x)<0<\cdots<\lambda_d(x)\},
\]
we can extend the mapping $\Phi_\rho$ defined in definition \ref{phi} to the case of index-$k$ saddle points
\[
\Phi_\rho(x) = \argmin_{y\in\mathbb{R}^d}\widetilde{W}_\rho(y;x,\bv v_{1:k}(x)). 
\]
Let $x^*_k$ be the index-$k$ saddle point of $V$  such that
\[
\nabla V(x^*_k) = 0 \text{ and }\lambda_1(x^*_k)<\lambda_2(x^*_k)<\cdots<\lambda_k(x^*_k)<0<\cdots<\lambda_d(x^*_k),
\]
then we can extend the conclusion from Theorem \ref{lem:fs} so that we have
\[
\Phi_\rho(x) = \{x\}\text{ if and only if } x = x^*_k.
\]
The corresponding game $G_{k+2}$ follows,
\begin{center}
	\begin{tabular}{|c|c|c|}
		\hline
		Player&Action&Cost function\\
		\hline
	    ``-1'' & $y\in\mathbb{R}^d$ & $\widetilde{W}_\rho(y;x,\bv u_{1:k})$\\
	    ``0'' & $x\in\mathbb{R}^d$ & $\frac{1}{2}\|x-y\|^2$\\
	     ``1'' & $\bv u_1\in\mathbb{S}^{d-1}$ & $\bv u_1^\top H_1(x)\bv u_1$\\
	     ``2'' & $\bv u_2\in\mathbb{S}^{d-1}$ & $\bv u_2^\top H_2(x)\bv u_2$\\
	     $\cdots$ & $\cdots$ & $\cdots$\\
	     ``$k$'' & $\bv u_k\in\mathbb{S}^{d-1}$ & $\bv u_k^\top H_k(x)\bv u_k$\\
		\hline	
	\end{tabular}
    \end{center}
where $H_i(x) = H(x) - \sum_{j<i}\lambda_j(x)\bv v_j(x)\bv v_j(x)^\top$. Then we can extend  Theorem \ref{thm:main} so that we have
$x = x^*_k$ if and only if  $(x,x,\bv v_{1:k}(x))$ is the Nash equilibrium of the corresponding game $G_{k+2}$.

In addition, Algorithm \ref{alg:algorithm1} could be extended to index-$k$ saddle points easily as well. Compared with the index-1 saddle point, we need to solve the top $k$ eigenvectors of the Hessian matrix $H(x)$ and substitute the cost function with the function in equation \eqref{p_w}, for each step of the iteration in Algorithm \ref{alg:algorithm1}. Yet  we do not intend to pursue this specific numerical issues  about computation of the top $k$ eigen-space in this work. 
 
 \section{Numerical results}\label{Num_ex}
 
In this section, we will illustrate the above new method by a two-dimensional ODE toy model  and a one-dimensional partial differential equation -- the Cahn-Hilliard equation.

 \subsection{A simple  example}
 Consider the following two dimensional potential function
 \begin{equation}\label{energy}
\begin{split}
V(x,y) = 
&3\exp(-x^2 - (y-\frac{1}{3})^2)-3\exp(-x^2 - (y-\frac{5}{3})^2)
     -5\exp(-(x-1)^2 - y^2)
     \\
     &-5\exp(-(x-1)^2 - y^2)
     +\exp(\frac{1}{5}x^4 + \frac{1}{5}(y-\frac{1}{3})^4).
\end{split}
 \end{equation}
 The energy function \eqref{energy} has three local minima approximately at $(1,0), (-1,0),$ and $(0,1.5)$, a maximum at $(0,0.5)$ and three saddle points at $(0.61727,1.10273),(-0.61727,1.10273)$, and $(0, -0.31582)$. 
 
 In this   experiment, we study the convergence properties of the  iterative proximal minimization   scheme when the 
 penalty factor $\rho$ varies.
 The original IMF-based methods\cite{IMA2015,IMF2015} correspond to $\rho=0$  with the same tuning the parameter $M$ (See Algorithm \ref{alg:algorithm1}).
It has been observed before\cite{IMA2015} that to use  small $M$ is usually more robust but slow in convergence, a large $M$ help utilize the theoretic quadratic convergence rate but the scheme then may be quite sensitive to the initial guess and show oscillation\cite{Ortner2016dimercycling} or divergence.

 \begin{figure}[htbp]
	\centering
	\includegraphics[width=0.3\linewidth]{"./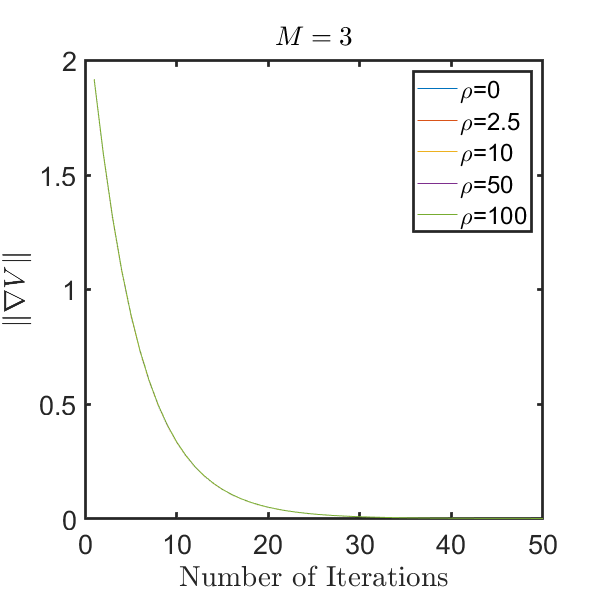"}
	\includegraphics[width=0.3\linewidth]{"./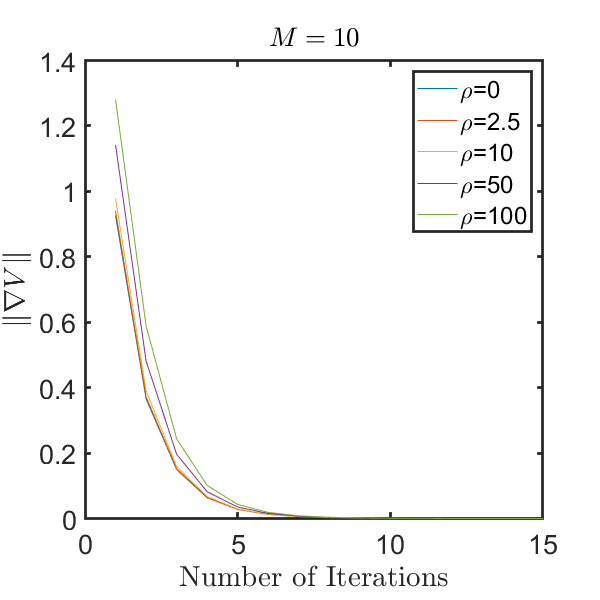"}
	\includegraphics[width=0.3\linewidth]{"./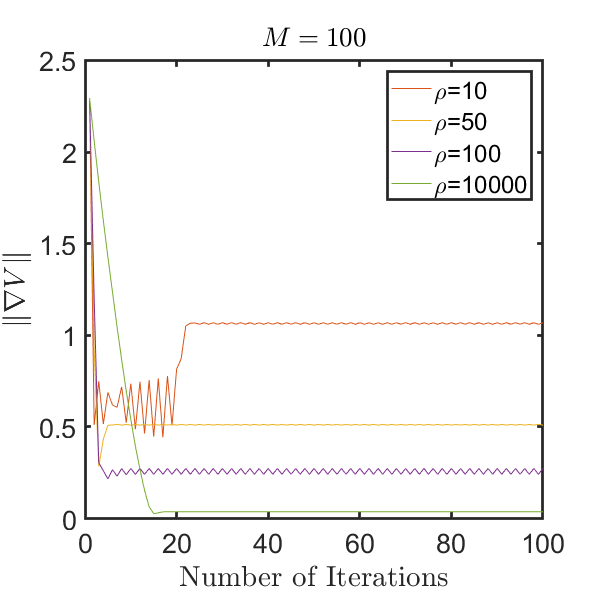"}
	\caption{ Decay of errors (measured by $\|\nabla V(x_k)\|$) in the   iterative proximal minimization   scheme, where $x$-axis is the number of iterations $k$.  $M$ is the steps of gradient descent in the subproblem for  $\widetilde{W}_\rho$ (see Algorithm \ref{alg:algorithm1}).}
	\label{fig:convergence_rate}
\end{figure}

\begin{figure}
	\centering
	\includegraphics[width=0.23\linewidth]{"./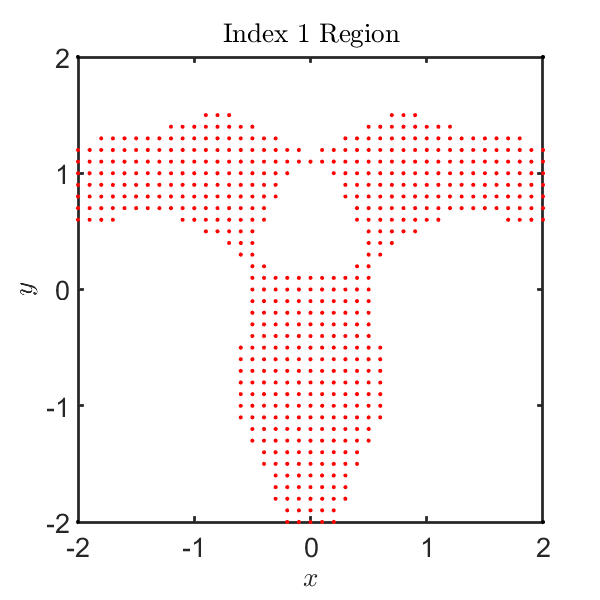"}
	\includegraphics[width=0.23\linewidth]{"./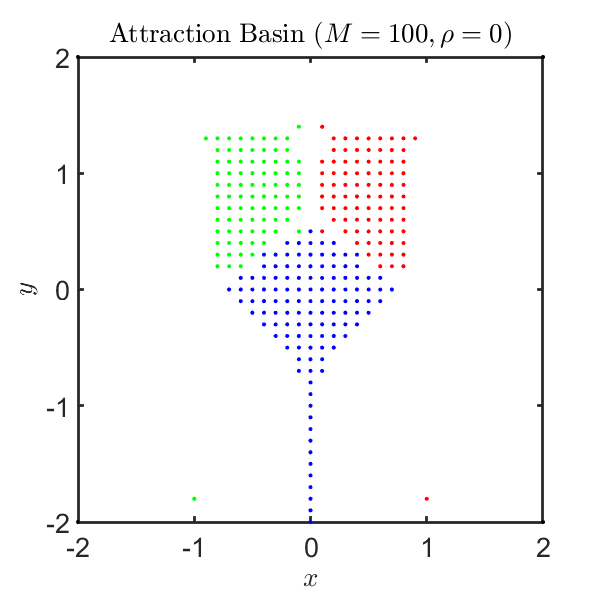"}
	\includegraphics[width=0.23\linewidth]{"./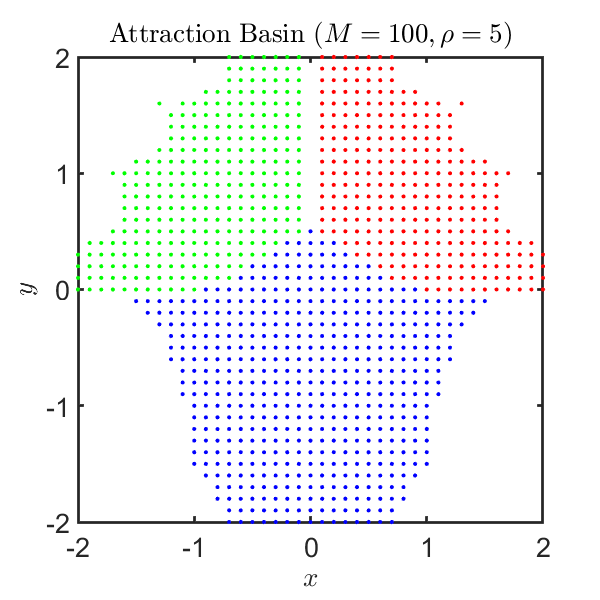"}
	\includegraphics[width=0.23\linewidth]{"./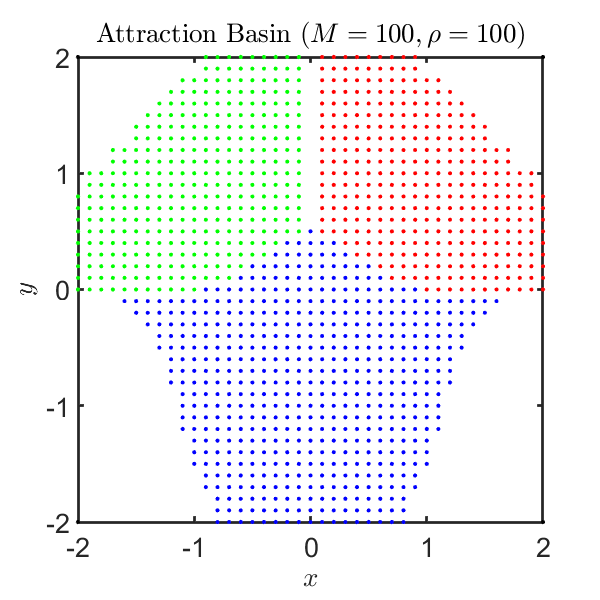"}
	\caption{Comparison of attraction basins towards each of three saddle points when $\rho$ varies. The index-1 region $\Omega_1$ of the function $V$ is also shown.}
	\label{fig:attraction_basin}
\end{figure}

We test different combinations of $\rho$ and $M$ on this example for the IPM algorithm. 
Figure \ref{fig:convergence_rate}
shows  how the errors decay. 
For small $M$, 
the minimization subproblem  in each iteration is solved very inexactly
and the penalty function $d(x,y)$
can barely take effects. So, the
difference in the value of $\rho$ leads to little difference in results.
  If $M$ is   large,   the  effect of  $\rho$
   becomes very important to maintain the convergence. 
 In the last panel of Figure \ref{fig:convergence_rate}, we deliberately select a bad initial point to  show the interesting effect that  the increasing 
 penalty factor 
can improve the convergence by suppressing the oscillations.

We further test  the convergence behaviours by 
examining the basin of attraction of our IPM scheme at $M=100$.
Figure \ref{fig:attraction_basin} exhibits the attraction basin, which is the collection of initial points that  our scheme converges to one of three saddle points under different values of $\rho$.
We can see that the attraction basin significantly expands at  $\rho=5$.
When $\rho=100$, the basin of attraction  can   cover the whole index-1 region $\omega_1$. $x\in\Omega_1$ is a sufficient condition for the minimization problem in each step of iteration to be globally convex as stated in Theorem \ref{th1}, from which we inferred that $x_0\in\Omega_1$ is a sufficient condition for the  iterative minimization  proximal minimization scheme to converge to a saddle point. And this aligns with the observations in this numerical experiment.

 \subsection{Cahn-Hilliard equation}
 The second example is   Cahn-Hilliard equation, which has  been widely used in many complicated moving interface problems in material sciences and fluid dynamics through a phase-field approach \cite{ ShenYang, Fife}. 
  Consider the Ginzburg-Landau free energy on a one dimensional interval $[0,1]$ 
  \begin{equation}
 \label{eqn:F_GL0}
F(\phi) = \int_\Omega \left[ \frac{\kappa^2}{2}
|\nabla \phi(x)|^2+ f(\phi(x)) \right]\,dx, \quad f(\phi) = (\phi^2-1)^2/4,
\end{equation}
with $\kappa=0.04$ and the constant mass $\int \phi dx=0.6$.
 The 
 Cahn-Hilliard (CH) equation \cite{CH-EQ}
 is the $H^{-1}$-gradient flow of $F(\phi)$,
\begin{equation}\label{CH_eq}
\frac{\partial\phi}{\partial t} = \Delta\frac{\delta F}{\delta\phi}=-\kappa^2 \Delta^2 \phi + \Delta (\phi^3-\phi).
\end{equation}
Here $\frac{\delta F}{\delta \phi}$ is the first order variation of $F$
in the standard $L^2$ sense. 

We are interested in the transition state of the Cahn-Hilliard equation, which is the index-1 saddle point of Ginzburg-Landau free energy in $H^{-1}$ Riemannian metric. However, in the calculation  by the original IMF\cite{IMF2015,convex_IMF}, the convergence effect relies on a good initial state as well as  the inner iteration number $M$.
In the IPM method, we take $d(x,y)=|x-y|^4$, the auxiliary functional then becomes
\begin{align}
    \widetilde{W}_\rho(\phi,\phi^{(k)}) = \int_\Omega \Big[ \frac{\kappa^2}{2}|\nabla\phi|^2 + f(\phi) - \kappa^2 |\nabla\hat\phi|^2 - 2f(\hat\phi) \Big]\,dx + \rho\int_\Omega |\phi-\phi^{(k)}|^4 \,dx.
\end{align}
The gradient flow of $\widetilde{W}_\rho(\phi,\phi^{(k)})$ in $H^{-1}$ metric is
$   \frac{\partial\phi}{\partial t} = \Delta \delta_\phi \widetilde{W}_\rho(\phi),
$
with $
 \delta_\phi \widetilde{W}_\rho(\phi) = -\kappa^2 \Delta \phi + (\phi^3-\phi) + 2 \inpd{\bv v_1}{ \kappa^2 \Delta \hat\phi - (\hat\phi^3 + \hat\phi) }_{L^2} \bv v_1 + 4\rho(\phi-\phi^{(k)})^3, $ and $\hat\phi=\phi^{(k)}+\inpd{\bv v_1}{\phi-\phi^{(k)}}_{H^{-1}}\bv v_1$.

\begin{figure}[htbp]
\centering
\subfloat[Transition state]{\label{fig:saddle}\includegraphics[scale=0.40]{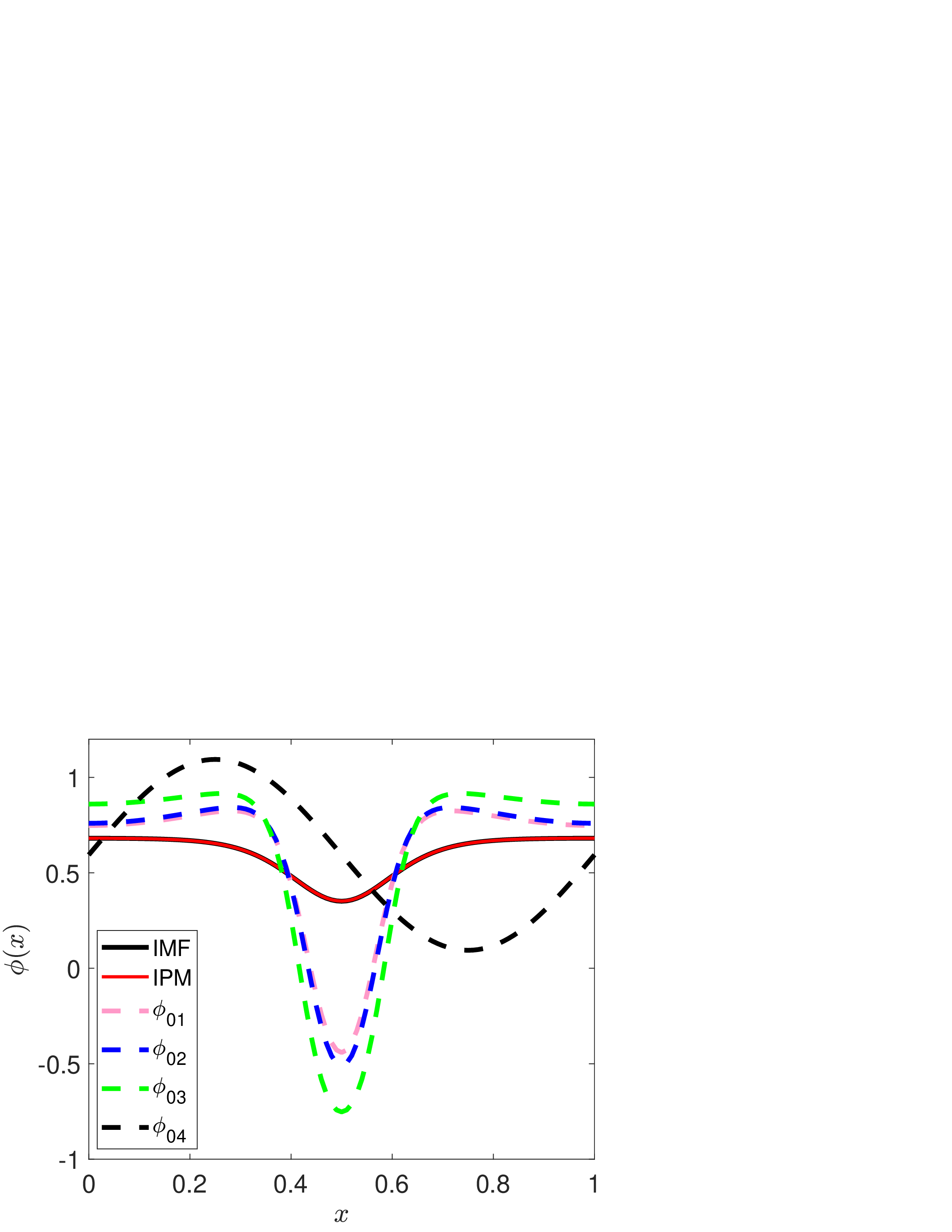}}
\subfloat[Decay of error]{\label{fig:Modif_IMF_convergence_rate}\includegraphics[scale=0.40]{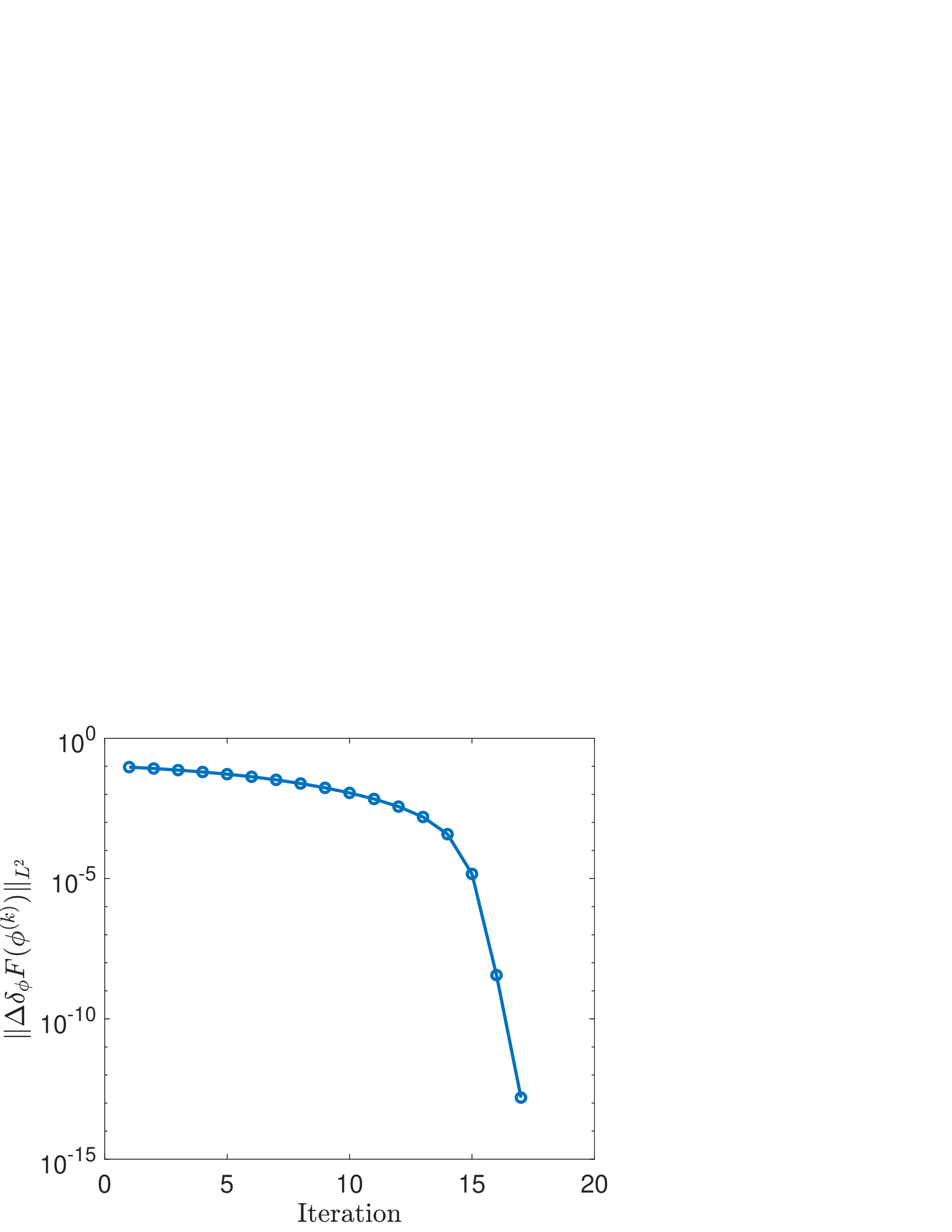}}
\caption{(a): Transition state (solid curves) computed by the IMF and IPM from different initial states.  The pink, blue and green dashed lines are the initial states correspond to the three states in Table \ref{1D_case} which are taken from the minimum energy path, while the dark dashed line is from the initial   $\phi_{04}=\sin(2\pi x)$.
 (b): The decay of the error $\| \Delta \delta_\phi F(\phi^{(k)})\|_{L^2}$ measured by the $L^2$ norm of the $H^{-1}$-gradient. }
 \label{saddle}
\end{figure}

In the numerical simulation, we take $\rho=100$ for the penalty factor and use the uniform
  mesh grid  for spatial discretization  $\{x_i = i h , i=0, 1, 2,\ldots, N\}. ~h = 1/N.$ $N=100$, $\Delta t = 0.1$. The periodic boundary condition is considered.
The saddle point of $F(\phi)$ is reproduced (see Figure \ref{fig:saddle}), which is the same as the result in the references\cite{convex_IMF, ProjIMF}.
Besides, the quadratic convergence rate of the IPM algorithm is  also verified empirically; see Figure \ref{fig:Modif_IMF_convergence_rate}. 
  In order to illustrate the advantage of this method, we make comparison of the convergence results between the original IMF ($\rho=0$) and the proximal method ($\rho=100$) here, starting from different initial states and  with the different  inner iteration number $M$.  
     Table \ref{1D_case} shows the convergence/divergence results for three initial states $\phi_{01}, \phi_{02} $  and $\phi_{03}$. The convergence/divergence result for the initial $\phi_{04}$ is the same as the result for $\phi_{03}$. We find that the farther the initial state is  away from the saddle point, the smaller number of inner iterations    the original IMF can tolerate, but  the IPM can ensure convergence regardless of all initial states and inner iteration numbers tested here.

\begin{table}[htbp]
{
\footnotesize   \caption{Comparison of numerical convergence from three differential initial guesses $\phi_{01}$,$\phi_{02}$ and $\phi_{03}$ shown in Figure \ref{fig:saddle}. ``IMF" means the original IMF; `` IPM" is the new method of iterative proximal minimization in this paper. 
$M$ is the number of gradient descent steps in  minimizing  the auxiliary functions. 
``$\checkmark$" and ``\ding{55}" mean the convergent and divergent results, respectively. }
\label{1D_case}
\begin{center}
    \begin{tabular}{|c|r|r|r|r|r|r|}
        \hline
        \multirow{2}{*}{$M$} & \multicolumn{2}{c|}{$\phi_{01}$} & \multicolumn{2}{c|}{$\phi_{02}$} & \multicolumn{2}{c|}{$\phi_{03}$} \\ \cline{2-7}
         &  IMF  & IPM  &  IMF  & IPM &  IMF  & IPM  \\ \hline
         10 & $\checkmark$ & $\checkmark$ & $\checkmark$ & $\checkmark$ & $\checkmark$ & $\checkmark$ \\ \hline 
          100 & $\checkmark$ & $\checkmark$ & $\checkmark$ & $\checkmark$ & \ding{55} & $\checkmark$ \\ \hline 
           200 & $\checkmark$ & $\checkmark$ & \ding{55} & $\checkmark$ & \ding{55} & $\checkmark$ \\ \hline 
            500 & \ding{55} & $\checkmark$ & \ding{55} & $\checkmark$ & \ding{55} & $\checkmark$ \\ \hline 
    \end{tabular}
  \end{center}
 }
\end{table}

  \section{Conclusion} \label{con}
  The calculation  of relevant index-1 saddle points to transitions on a potential energy surface 
  is an important computational task for rare event and phase transitions in chemistry and material science. 
We have established the equivalent connection between the index-1 saddle point of any  function $V$ with continuous Hessian and the Nash equilibrium of a differential game constructed based on the iterative minimization formulation \cite{IMF2015} for saddle points.
The numerical contribution is  a new iterative minimization algorithm  with the proximal penalty function to enhance the robustness. The generalization to any Morse index $k$ is also discussed.   The saddle-point calculation is in general still a formidable challenge compared to the  
gradient descent method for minimum points. It might be rewarding for a  further exploration of  the existing saddle-point search methods based on the minimal mode  and the algorithmic game theory.





 \bibliography{MsGAD,gad,CVXIMF,my,ms}

\begin{thebibliography}{10}

\bibitem{2018mechanics}
{\sc D.~Balduzzi, S.~Racaniere, J.~Martens, J.~Foerster, K.~Tuyls, and
  T.~Graepel}, {\em The mechanics of n-player differentiable games}, in
  International Conference on Machine Learning, PMLR, 2018, pp.~354--363.

\bibitem{TPSChandler2002}
{\sc P.~G. Bolhuis, D.~Chandler, C.~Dellago, and P.~L. Geissler}, {\em
  Transition path sampling: Throwing ropes over rough mountain passes, in the
  dark}, Annual Review of Physical Chemistry, 53 (2002), pp.~291--318.
\newblock PMID: 11972010.

\bibitem{CH-EQ}
{\sc J.~W. Cahn and J.~E. Hilliard}, {\em Free energy of a nonuniform system.
  i. interfacial free energy}, The Journal of Chemical Physics, 28 (1958),
  pp.~258--267.

\bibitem{cerjan1981}
{\sc C.~J. Cerjan and W.~H. Miller}, {\em On finding transition states}, J.
  Chem. Phys., 75 (1981), pp.~2800--2806.

\bibitem{MPA1993}
{\sc Y.~Choi and P.~McKenna}, {\em A mountain pass method for the numerical
  solution of semilinear elliptic problems}, Nonlinear Analysis: Theory,
  Methods \& Applications, 20 (1993), pp.~417--437.

\bibitem{Crippen1971}
{\sc G.~M. Crippen and H.~A. Scheraga}, {\em Minimization of polypeptide energy
  : {XI}. the method of gentlest ascent}, Arch. Biochem. Biophys., 144 (1971),
  pp.~462--466.

\bibitem{Fife}
{\sc H.~Dang, P.~C. Fife, and L.~A. Peletier}, {\em Saddle solutions of the
  bistable diffusion equation}, Ztschrift Für Angewandte Mathematik Und Physik
  Zamp, 43 (1992), pp.~984--998.

\bibitem{String2002}
{\sc W.~E, W.~Ren, and E.~Vanden-Eijnden}, {\em String method for the study of
  rare events}, Phys. Rev. B, 66 (2002), p.~052301.

\bibitem{String2007}
\leavevmode\vrule height 2pt depth -1.6pt width 23pt, {\em Simplified and
  improved string method for computing the minimum energy paths in
  barrier-crossing events}, J. Chem. Phys., 126 (2007), p.~164103.

\bibitem{GAD2011}
{\sc W.~E and X.~Zhou}, {\em The gentlest ascent dynamics}, Nonlinearity, 24
  (2011), p.~1831.

\bibitem{IMF2015}
{\sc W.~Gao, J.~Leng, and X.~Zhou}, {\em An iterative minimization formulation
  for saddle point search}, SIAM J. Numer. Anal., 53 (2015), pp.~1786--1805.

\bibitem{IMA2015}
\leavevmode\vrule height 2pt depth -1.6pt width 23pt, {\em Iterative
  minimization algorithm for efficient calculations of transition states}, J.
  Comput. Phys., 309 (2016), pp.~69 -- 87.

\bibitem{gemp2020}
{\sc I.~Gemp, B.~McWilliams, C.~Vernade, and T.~Graepel}, {\em Eigengame: {PCA}
  as a {Nash} equilibrium}, arXiv preprint arXiv:2010.00554,  (2020).

\bibitem{ProjIMF}
{\sc S.~Gu, L.~Lin, and X.~Zhou}, {\em Projection method for saddle points of
  energy functional in ${H}^{-1}$ metric}, Journal of scientific computing, 89
  (2021), pp.~1--17.

\bibitem{MsGAD2017}
{\sc S.~Gu and X.~Zhou}, {\em Multiscale gentlest ascent dynamics for saddle
  point in effectve dynamics of slow-fast system}, Commun. Math. Sci., 15
  (2017), pp.~2279--2302.

\bibitem{convex_IMF}
{\sc S.~Gu and X.~Zhou}, {\em Convex splitting method for the calculation of
  transition states of energy functional}, J. Comput. Phys., 353 (2018),
  pp.~417--434.

\bibitem{SimGAD2018}
{\sc S.~Gu and X.~Zhou}, {\em Simplified gentlest ascent dynamics for saddle
  points in non-gradient systems}, Chaos: An Interdisciplinary Journal of
  Nonlinear Science, 28 (2018), p.~123106.

\bibitem{Dimer1999}
{\sc G.~Henkelman and H.~J\'{o}nsson}, {\em A dimer method for finding saddle
  points on high dimensional potential surfaces using only first derivatives},
  J. Chem. Phys., 111 (1999), pp.~7010--7022.

\bibitem{NEB1998}
{\sc H.~J\`{o}nsson, G.~Mills, and K.~W. Jacobsen}, {\em Nudged elasic band
  method for finding minimum energy paths of transitions}, in Classical and
  Quantum Dynamics in Condensed Phase Simulations, B.~J. Berne, G.~Ciccotti,
  and D.~F. Coker, eds., New Jersey, 1998, LERICI, Villa Marigola,Proceedings
  of the International School of Physics, World Scientific, p.~385.

\bibitem{LOR2013}
{\sc J.~Leng, W.~Gao, C.~Shang, and Z.-P. Liu}, {\em Efficient softest mode
  finding in transition states calculations}, J. Chem. Phys., 138 (2013),
  p.~094110.

\bibitem{DGM2019}
{\sc A.~Letcher, D.~Balduzzi, S.~Racaniere, J.~Martens, J.~Foerster, K.~Tuyls,
  and T.~Graepel}, {\em Differentiable game mechanics}, The Journal of Machine
  Learning Research, 20 (2019), pp.~3032--3071.

\bibitem{Ortner2016dimercycling}
{\sc A.~Levitt and C.~Ortner}, {\em Convergence and cycling in walker-type
  saddle search algorithms}, SIAM J. Numer. Anal., 55 (2017).

\bibitem{GAD-DFT2015}
{\sc C.~Li, J.~Lu, and W.~Yang}, {\em Gentlest ascent dynamics for calculating
  first excited state and exploring energy landscape of {K}ohn-{S}ham density
  functionals}, The Journal of Chemical Physics, 143 (2015), p.~224110.

\bibitem{ZHOUJXSISC2001}
{\sc Y.~Li and J.~Zhou}, {\em A minimax method for finding multiple critical
  points and its applications to semilinear pdes}, SIAM J Sci Comput, 23
  (2001), pp.~840--865.

\bibitem{nash1950}
{\sc J.~F. Nash~Jr}, {\em Equilibrium points in n-person games}, Proceedings of
  the national academy of sciences, 36 (1950), pp.~48--49.

\bibitem{omidshafiei2020navigating}
{\sc S.~Omidshafiei, K.~Tuyls, W.~M. Czarnecki, F.~C. Santos, M.~Rowland,
  J.~Connor, D.~Hennes, P.~Muller, J.~P{\'e}rolat, B.~D. Vylder, et~al.}, {\em
  Navigating the landscape of multiplayer games}, Nature communications, 11
  (2020), pp.~1--17.

\bibitem{osborne1994}
{\sc M.~J. Osborne and A.~Rubinstein}, {\em A course in game theory}, MIT
  press, 1994.

\bibitem{Ren2013}
{\sc W.~Ren and E.~Vanden-Eijnden}, {\em A climbing string method for saddle
  point search}, J. Chem. Phys., 138 (2013), p.~134105.

\bibitem{SamantaGAD}
{\sc A.~Samanta and W.~E}, {\em Atomistic simulations of rare events using
  gentlest ascent dynamics}, J. Chem. Phys., 136 (2012), p.~124104.

\bibitem{ShenYang}
{\sc J.~Shen and X.~Yang}, {\em Numerical approximations of allen-cahn and
  cahn-hilliard equations}, Discrete and Continuous Dynamical Systems, 28
  (2010), pp.~1669--1691.

\bibitem{Energylanscapes}
{\sc D.~J. Wales}, {\em Energy Landscapes with Application to Clusters,
  Biomolecules and Glasses}, Cambridge University Press, 2003.

\bibitem{ZHOUJXSISC2012}
{\sc Z.~Xie, Y.~Yuan, and J.~Zhou}, {\em On finding multiple solutions to a
  singularly perturbed {Neumann} problem}, SIAM J Sci Comput, 34 (2012),
  p.~A395.

\bibitem{yin2019high}
{\sc J.~Yin, L.~Zhang, and P.~Zhang}, {\em High-index optimization-based
  shrinking dimer method for finding high-index saddle points}, SIAM Journal on
  Scientific Computing, 41 (2019), pp.~A3576--A3595.

\bibitem{DuSIAM2012}
{\sc J.~Zhang and Q.~Du}, {\em Shrinking dimer dynamics and its applications to
  saddle point search}, SIAM J. Numer. Anal., 50 (2012), pp.~1899--1921.

\bibitem{zhang2016optimization}
{\sc L.~Zhang, Q.~Du, and Z.~Zheng}, {\em Optimization-based shrinking dimer
  method for finding transition states}, SIAM J. Sci. Comput., 38 (2016),
  pp.~A528--A544.

\bibitem{npjLeiZhang2016}
{\sc L.~Zhang, W.~Ren, A.~Samanta, and Q.~Du}, {\em Recent developments in
  computational modelling of nucleation in phase transformations}, npj
  Computational Materials, 2 (2016), p.~16003.

\end{thebibliography}
 
\bibliographystyle{siam} 

\end{document}